\newtheorem{The}{Theorem}[section]
\newtheorem{Lemme}[The]{Lemma}
\newtheorem{prop}[The]{Proposition}
\theoremstyle{definition}
\theoremstyle{remark}
\newtheorem{Rk}[The]{Remark}
\numberwithin{equation}{section}
\newcommand{\tend}[2]{\displaystyle\mathop{\longrightarrow}_{#1\rightarrow#2}}
\title{
\normalsize
\textbf{{
CRITICAL ONE-ARM PROBABILITY FOR THE METRIC \\ GAUSSIAN FREE FIELD IN LOW DIMENSIONS}}}
\author{}
\date{}
\newcommand{\E}{\mathbb{E}}
\newcommand{\Q}{\mathbb{Q}}
\newcommand{\R}{\mathbb{R}}
\newcommand{\Z}{\mathbb{Z}}
\newcommand{\G}{\mathcal{G}}
\newcommand{\K}{\mathcal{K}}
\renewcommand{\P}{\mathbb{P}}
\newcommand{\eps}{\varepsilon}
\renewcommand{\phi}{\varphi}
\renewcommand{\tilde}{\widetilde}
\renewcommand{\epsilon}{\varepsilon}
\definecolor{Red}{rgb}{1,0,0}
\definecolor{Blue}{rgb}{0,0,1}
\definecolor{Olive}{rgb}{0.41,0.55,0.13}
\definecolor{Yarok}{rgb}{0,0.5,0}
\definecolor{Green}{rgb}{0,1,0}
\definecolor{MGreen}{rgb}{0,0.8,0}
\definecolor{DGreen}{rgb}{0,0.55,0}
\definecolor{Yellow}{rgb}{1,1,0}
\definecolor{Cyan}{rgb}{0,1,1}
\definecolor{Magenta}{rgb}{1,0,1}
\definecolor{Orange}{rgb}{1,.5,0}
\definecolor{Violet}{rgb}{.5,0,.5}
\definecolor{Purple}{rgb}{.75,0,.25}
\definecolor{Brown}{rgb}{.75,.5,.25}
\definecolor{Grey}{rgb}{.7,.7,.7}
\definecolor{Black}{rgb}{0,0,0}
\begin{document}
\thispagestyle{empty}
\maketitle
\vspace{0.1cm}
\begin{center}
\vspace{-1.9cm}
Alexander Drewitz$^1$, Alexis Pr\'evost$^2$ and Pierre-Fran\c cois Rodriguez$^3$ 

\end{center}
\vspace{0.1cm}
\begin{abstract}
\centering
\begin{minipage}{0.90\textwidth}
We investigate the bond percolation model on transient weighted graphs ${G}$ induced by the excursion sets of the Gaussian free field on the corresponding metric graph. Under the sole assumption that its sign clusters do not percolate, we derive an extension of Lupu's formula for the two-point function at criticality. We then focus on the low-dimensional case $0< \nu < \frac{\alpha}{2}$, where $\alpha$ governs the polynomial volume growth of $G$ and $\nu$ the decay rate of the Green's function on $G$. In particular, this includes the benchmark case ${G}=\Z^3$, for which $\alpha=3$ and $\nu= \alpha-2=1$. We prove under these assumptions that the critical one-arm probability decays with distance $R$ like $R^{-\frac{\nu}{2}}$, up to multiplicative constants.  
\end{minipage}
\end{abstract}

\vspace{5.5cm}
\begin{flushleft}

\noindent\rule{5cm}{0.4pt} \hfill May 2024 \\
\bigskip
\begin{multicols}{2}

$^1$Universit\"at zu K\"oln\\
Department Mathematik/Informatik \\
Weyertal 86--90 \\
50931 K\"oln, Germany. \\
\url{adrewitz@uni-koeln.de}\\[2em]

$^2$University of Geneva\\
Section of Mathematics\\
24, rue du G\'enéral Dufour\\
1211 Genève 4, Suisse.
\\\url{alexis.prevost@unige.ch}\\[2em]

\columnbreak
\thispagestyle{empty}
\bigskip
\medskip
\hfill$^3$Imperial College London\\
\hfill Department of Mathematics\\
\hfill London SW7 2AZ \\
\hfill United Kingdom\\
\hfill \url{p.rodriguez@imperial.ac.uk} 
\end{multicols}
\end{flushleft}

\newpage

\section{Introduction} \label{sec:intro}

The bond percolation problem induced by the excursion sets of the Gaussian free field on metric graphs has recently attracted considerable attention. This model is a variant of a percolation model
introduced by Lebowitz and Saleur \cite{MR865243}, see also \cite{MR914444} for first rigorous results, and more recently re-initiated in \cite{MR3053773}, which concerns percolation of excursion sets of the Gaussian free field on (transient) graphs. The variant is obtained by taking scaling limits along each edge, thereby replacing the original graph by its metric version, following an idea of Lupu \cite{MR3502602}. Both discrete and metric models, along with others (including, notably, the vacant set of random interlacements), are expected to belong to the same universality class \cite{chalhoub2024universality}, characterized by a continuous transition with scaling near the critical point \cite{DrePreRod5}. The continuous structure of the metric graph leads to a degree of integrability of the model. This has prompted significant progress in transient setups, see \cite{DiWi, DrePreRod5, DrePreRod3,MR3502602,werner2020clusters,cai2023onearm,ganguly2024ant,DrePreRod3}, some of which concerns -- remarkably -- the challenging low dimensions, below the mean-field regime.

In this article we focus on the above percolation model, which is defined as follows. Let $\G = (G, \lambda)$ be a weighted graph, with countably infinite vertex set $G$ and symmetric weights $\lambda_{x,y}=\lambda_{y,x} \geq 0$, such that the graph with vertex set $G$ and edge set $\{\{x,y\}: \lambda_{x,y} > 0\}$ is connected and locally finite. The weights give rise to the continuous time Markov chain $X$ on $G$ with generator $
Lf(x)= \frac{1}{\lambda_x} \sum_{y \in G} \lambda_{x,y} (f(y) - f(x)),
$
for suitable $f: G \to \R$, where $\lambda_x = \sum_{y \sim x} \lambda_{x,y}$. This Markov chain is referred to as the \emph{random walk} on $\G$. For $x \in G$ we write $P_x$ for its canonical law when started in $x.$ We assume that $X$ is transient, which is a condition on $\G$. The set $G$ is endowed with a metric $d(\cdot,\cdot)$. For many cases of interest, one can afford to simply choose $d= d_{\text{gr}}$, the graph distance on $\mathcal{G}$, i.e.~$d_{\text{gr}}(x,y)=1$ if and only if $\lambda_{x,y}>0$ (extended to a geodesic distance on $G$); we refer to \cite{DrePreRod2} for an extensive discussion of settings which may require different choices of $d.$ 

In the sequel, the metric graph, or cable system, $\tilde\G$ associated to $\mathcal G$ will play an important role. It is obtained by replacing all edges $\{x,y\}$ by one-dimensional closed intervals of length $ ({2\lambda_{x,y}})^{-1}$, glued through their endpoints; see \cite{MR3502602, DrePreRod3} for precise definitions. The chain $X$ naturally extends to a Markov process on $\tilde\G$ with continuous trajectories. Its canonical law is denoted by $P_x^{\tilde\G}= P_x$ when starting at $ x \in \tilde\G$. 
A set $K\subset \tilde{\mathcal{G}}$ is said to be bounded if $K\cap G$ is a bounded (or equivalently, finite) set.

Attached to the above setup is the random field $\varphi=(\varphi_x)_{x\in \tilde{\G}}$, the mean zero Gaussian free field on $\tilde{\G}$, with canonical law $\P^{\tilde{\G}}=\P$. The percolation problem we are interested in
is obtained by considering excursion sets of $\varphi$ above varying height $a\in \mathbb{R}$. Let $0$ denote an arbitrary point in $\tilde\G$ and consider, for $a\in \R$, \begin{equation}
\label{eq:introKa}
\begin{split}
&{\mathcal{K}}^a \stackrel{\text{def.}}{=} \text{ the connected component of $0$ in $\{ x \in \widetilde{\mathcal{G}}:  \varphi_x \geq a \}$} 
\end{split}
\end{equation}
(with ${\mathcal{K}}^a=\emptyset$ if $\varphi_0 <a$), and the percolation function 
\begin{equation}
\label{eq:intro_theta0}
{\theta}_0(a) \stackrel{\text{def.}} 
=\P({\mathcal{K}}^a \text{ is bounded})
, \quad a \in \R.
\end{equation}
In view of \eqref{eq:intro_theta0}, one defines the critical parameter associated to this percolation model as
\begin{equation}
\label{eq:intro_h_*}
a_* = a_*(\G) =\inf\{ a \in \R: \,  {\theta}_0(a) =1 \}.
\end{equation}
One knows, either by adapting a soft (indirect) argument of \cite{MR914444} or by a (direct) argument involving interlacements and an appropriate isomorphism theorem, that $a_* \geq 0$ for \textit{any} transient $ {\mathcal{G}}$. We will  always assume that
\begin{equation}
\label{eq:critpar0}
 {\theta}_0(0) = 1,
\end{equation}
which is a generic property, see~\eqref{T1_signsat} below. In particular, \eqref{eq:critpar0} implies that
$a_*=0$. The regime $a> 0(=a_*)$ will be referred to as \emph{subcritical} and \eqref{eq:intro_h_*} implies that the probability for $\{ \varphi \geq a\}$ to contain an unbounded cluster (anywhere) vanishes for such $a$. On the other hand, this probability is strictly positive when $a< 0$, which constitutes the \emph{supercritical} regime. 

The fact that \eqref{eq:critpar0} holds follows in practice from the fact that the capacity observable $\mathrm{cap}({\mathcal{K}}^0$ is finite a.s.~on \emph{any} transient graph $\G$ (see \eqref{eq:defcap} for the definition of $\mathrm{cap}(\cdot)$). In fact, rather more is true on graphs that satisfy \eqref{eq:critpar0}. Indeed \cite[Corollary 1.3]{DrePreRod5} implies in this case that 
\begin{equation}\label{eq:cap-tail}
\P\big( \text{cap}({\mathcal{K}}^0) > t \big) \asymp t^{-1/2} \text{ as } t \to \infty.
\end{equation}
The precise asymptotics \eqref{eq:cap-tail} will play an important role in the sequel.
 
We now focus on the one-arm probability for this percolation problem at criticality. For open or closed $U,V \subset \tilde{\G}$, we denote by $\{ U \leftrightarrow  V \}$ the event that $U$ and $V$ are connected by a continuous path in $\tilde{\G}$. Our main interest is in the quantity
\begin{equation}\label{eq:cross-proba}
\psi(R)\stackrel{\text{def.}}{=} \P \big( 0 \leftrightarrow \partial B(0,R) \text{ in }\K^0 \big),
\end{equation}
 where $B(0,R)$ refers to the closed ball of radius $R$ around $0$ in $\tilde{\G}$, see Section~\ref{sec:Lupu} for the precise definition, and $\partial B$ denotes the topological boundary of $B \subset \tilde{\G}$.

The quantitative study of $\psi$ involves further assumptions on $\G=(G,\lambda)$. We will often work under the ellipticity assumption 
\begin{align}
&\label{eq:ellipticity} \tag{$p_0$}
\ \, \lambda_{x,y}/\lambda_x\geq \Cl[c]{c:ellipticity}, \text{ for all }x, y\in{G} \text{ s.t.~$\lambda_{x,y}>0$,} 
\end{align}
for some $\Cr{c:ellipticity} \in (0,1)$ (the terminology \eqref{eq:ellipticity}-condition is borrowed from \cite{MR1853353}). We will also typically require the graph to be $\alpha$-Ahlfors regular, i.e.~there exist a positive exponent $\alpha$ and 
$c,c' \in (0 ,\infty)$ such that the volume growth condition
\begin{equation}
\label{eq:intro_sizeball} \tag{$V_{\alpha}$}
cR^{\alpha}\leq \lambda(B(x,R))\leq c'R^{\alpha}\quad \text{ for all }x\in{G}\text{ and }R\geq1,
\end{equation}
is satisfied. Furthermore we impose that there exist constants $c,c' \in (0,\infty)$ and an exponent $\nu \in (0,\infty)$ such that the Green's function $g$ on $G$ satisfies
\begin{equation}
\label{eq:intro_Green}\tag{$G_{\nu}$}
\begin{split}
&c\leq g(x,x)\leq c' 
\text{ and }  
c d(x,y)^{-\nu}\leq g(x,y)\leq c' d(x,y)^{-\nu} \quad \text{ for all }x \neq y\in{G}.
\end{split}
\end{equation}
Condition~\eqref{eq:intro_Green} alone implies \eqref{eq:critpar0}; see \cite[Lemma 3.4(2) and Corollary 3.3(1)]{DrePreRod3}.
Moreover, when $d=d_{\text{gr}}$ is the graph distance, the above requirements necessarily imply that $(0<) \nu \leq \alpha -2$, see \cite{MR2076770}, and in particular that $\alpha >2$. We will always assume that this is the case. Examples of graphs satisfying the above conditions include the square lattice $\Z^{\alpha}$ for any integer $\alpha\geq3$, for which $\nu=\alpha-2$, as well as various fractal and Cayley graphs \cite{DrePreRod2}. An example with non-integer values of $\alpha,\nu$ is the graphical Sierpinski
carpet in $\alpha$ dimensions; see \cite[Remark~3.10,2)]{DrePreRod2}, for which $1<\nu<2$ when $\alpha=4$. More generally, for any value of $\alpha$ and $\nu$ as before there exists  a graph satisfying the previous conditions \cite{MR2076770}.

We can now formulate our main result. The constants $c,C,$ etc.~below may depend implicitly on $\nu$ and $\alpha$, and the other constants appearing in \eqref{eq:ellipticity}, \eqref{eq:intro_sizeball} and \eqref{eq:intro_Green}.
\begin{The} \label{thm:main}
If $\G$ satisfies \eqref{eq:ellipticity}, \eqref{eq:intro_sizeball} and \eqref{eq:intro_Green} with $0< \nu < \frac{\alpha}{2}$, one has
\begin{equation}
\label{eq:1arm-main}
cR^{-\frac{\nu}{2}} \leq \psi(R) \leq CR^{-\frac{\nu}{2}}, \text{ for all $R \geq 1$.}
\end{equation}
\end{The}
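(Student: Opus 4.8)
My plan is to read off both bounds in \eqref{eq:1arm-main} from the capacity tail \eqref{eq:cap-tail}, using the heuristic that, for $0<\nu<\frac\alpha2$, reaching distance $R$ is equivalent, up to constants, to $\K^0$ having capacity of order $R^\nu$. Two deterministic capacity estimates underlie everything. First, under \eqref{eq:intro_sizeball} and \eqref{eq:intro_Green} one has $\mathrm{cap}(B(0,r))\asymp r^\nu$ (a standard consequence, cf.\ \cite{DrePreRod3}); in particular any bounded $K\subset\tilde\G$ with $0\in K$ obeys $\mathrm{cap}(K)\leq C\,\mathrm{diam}(K)^\nu$. Second, for a \emph{connected} $K$ with $\mathrm{diam}(K)\geq r$ I would test the energy against a measure on $K$ projecting uniformly onto a geodesic realizing the diameter; since the projection is $1$-Lipschitz and $g\leq Cd(\cdot,\cdot)^{-\nu}$, the energy is bounded by $\iint_{[0,r]^2}|u-v|^{-\nu}\,du\,dv\asymp r^{2-\nu}$, which \emph{converges, and hence yields} $\mathrm{cap}(K)\geq c\,r^\nu$, precisely when $\nu<1$.

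\textbf{Lower bound.} This direction is immediate from the first estimate and the lower half of \eqref{eq:cap-tail}. If $\mathrm{cap}(\K^0)\geq cR^\nu$ then $\mathrm{diam}(\K^0)\geq c'R$, so since $0\in\K^0$ and $\K^0$ is connected it meets $\partial B(0,c'R/2)$; thus
\[
\psi(c'R/2)\geq \P\big(\mathrm{cap}(\K^0)\geq cR^\nu\big)\geq c''\,(R^\nu)^{-1/2}=c''R^{-\nu/2},
\]
and the lower bound in \eqref{eq:1arm-main} follows after adjusting constants. I note that this half uses neither \eqref{eq:ellipticity} nor the restriction $\nu<\frac\alpha2$.

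\textbf{Upper bound, the range $\nu<1$.} Here the argument is dual and equally short. On $\{0\leftrightarrow\partial B(0,R)\}$ the cluster $\K^0$ is connected with $\mathrm{diam}(\K^0)\geq R$, so the second estimate above forces $\mathrm{cap}(\K^0)\geq cR^\nu$; the upper half of \eqref{eq:cap-tail} then gives $\psi(R)\leq\P(\mathrm{cap}(\K^0)\geq cR^\nu)\leq CR^{-\nu/2}$.

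\textbf{Upper bound, the range $1\leq\nu<\frac\alpha2$, and the main obstacle.} When $\nu\geq1$ the energy integral above diverges, and a thin tube joining $0$ to $\partial B(0,R)$ has capacity only of order $R$ (or $R/\log R$ at $\nu=1$), far below $R^\nu$; the deterministic inclusion $\{0\leftrightarrow\partial B(0,R)\}\subseteq\{\mathrm{cap}(\K^0)\geq cR^\nu\}$ therefore fails, and the crux becomes showing that such thin crossings are probabilistically negligible, i.e.\ that conditionally on $\{0\leftrightarrow\partial B(0,R)\}$ the cluster is \emph{fat}, $\mathrm{cap}(\K^0)\geq cR^\nu$ with overwhelming probability, after which \eqref{eq:cap-tail} again closes the bound. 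To this end I would exploit the extended Lupu two-point formula, which yields $\P(0\leftrightarrow x)\asymp d(0,x)^{-\nu}$ and, more importantly, the same conditioning-on-$\K^0$ mechanism (domain Markov property together with $\varphi\equiv 0$ on $\partial\K^0$) that produces it. Writing $V=\lambda\big(\K^0\cap(B(0,2R)\setminus B(0,R))\big)$ for the mass deposited in the outer annulus, the two-point estimate gives $\E[V]\asymp R^{\alpha-\nu}$, while $\{V>0\}$ essentially coincides with the one-arm event; the target $\psi(R)\leq CR^{-\nu/2}$ is thus equivalent to the statement that, given $V>0$, one typically has $V\gtrsim R^{\alpha-\nu/2}$, i.e.\ a crossing fills a definite fraction of the ball. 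The hypothesis $2\nu<\alpha$ enters exactly here: it guarantees $R^{2\nu}\ll\lambda(B(0,R))\asymp R^\alpha$, which is what lets a second-moment/renormalization argument rule out thin crossings, and dually underlies a scale recursion $\psi(2R)\leq 2^{-\nu/2}(1+o(1))\,\psi(R)$ whose per-scale factor $2^{-\nu/2}$ is precisely the cost of raising the capacity from $R^\nu$ to $(2R)^\nu$ read off from \eqref{eq:cap-tail}. I expect this multiscale control of atypical thin crossings---quantifying fatness conditionally on reaching distance $R$---to be the main difficulty; everything else reduces to \eqref{eq:cap-tail} and the two elementary capacity estimates above.
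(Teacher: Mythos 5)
Your lower bound and your upper bound in the regime $\nu<1$ are correct, and they are exactly the arguments the paper attributes to \cite{DrePreRod5} and does not reprove: monotonicity of capacity gives $\mathrm{cap}(\K^0)\leq C\,\mathrm{diam}(\K^0)^{\nu}$, the energy/projection estimate gives the reverse inequality for connected sets when $\nu<1$, and both halves of \eqref{eq:1arm-main} then follow from \eqref{eq:cap-tail}. You also correctly isolate the crux: for $1\leq\nu<\frac{\alpha}{2}$ the deterministic inclusion $\{0\leftrightarrow\partial B(0,R)\}\subseteq\{\mathrm{cap}(\K^0)\geq cR^{\nu}\}$ fails, and one must show that crossings by clusters of small capacity are probabilistically negligible.

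At exactly this point, however, your proposal stops being a proof. The statement you need --- on the one-arm event, $\mathrm{cap}(\K^0\cap B)\geq cR^{\nu}$ outside an event of probability much smaller than $R^{-\nu/2}$ --- is precisely Theorem~\ref{eq:explic1armbound}, i.e.\ \eqref{eq:bound1arm+cap}, whose proof occupies all of Sections~\ref{sec:Lupu} and~\ref{sec:one-arm}; your items (ii)--(iii) do not substitute for it. First, a second-moment computation for the annulus volume $V$ cannot deliver the conditional fatness $V\gtrsim R^{\alpha-\nu/2}$ given $V>0$: second moments yield \emph{lower} bounds on $\P(V>0)$ (Paley--Zygmund type), whereas what you need is an upper bound on $\P(V>0,\ V \text{ small})$, which is exactly the thin-crossing estimate you set out to prove --- the argument is circular. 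Second, even granting a recursion $\psi(2R)\leq 2^{-\nu/2}(1+o(1))\psi(R)$, compounding uncontrolled $(1+o(1))$ factors over the $\log_2 R$ dyadic scales only gives $\psi(R)\leq R^{-\nu/2+o(1)}$, i.e.\ at best the polylogarithmically corrected bounds already known from \cite{DrePreRod8}, which Theorem~\ref{thm:main} is precisely meant to remove; to do better one must make the per-scale errors summable, and that is the hard quantitative input. The paper's mechanism for this is genuinely different and absent from your sketch: it generalizes Lupu's formula to graphs with killing (Proposition~\ref{P:lupu-gen}), applies it on the complement of ``obstacle sets'' formed by big loop-soup loops (Lemma~\ref{lem:newformulacap} and \eqref{eq:consequencenewcapformula}), and runs a recursion not on $\psi$ but on capacity thresholds $t_R^{a,b}$ over annuli at iterated-logarithm scales (Proposition~\ref{P:t_R-LB}), where the multiplicative errors $\exp(Ca^{-\alpha})$ are absorbed by the exponential gain $\exp(-cs^{-1/\nu})$ rather than compounded. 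Without an argument of comparable strength, the case $1\leq\nu<\frac{\alpha}{2}$ --- the whole content of the theorem --- remains unproven.
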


The function $\psi$ has enjoyed a flurry of recent activity. We now briefly survey previous results. The lower bound in \eqref{eq:1arm-main} is known for all $\nu >0$, see Corollary 1.3 and (1.22)--(1.23) in \cite{DrePreRod5}; see also \cite{DiWi} in the case of $\mathbb{Z}^{\alpha}$, for (integer) $\alpha \geq 3$. The article \cite{DrePreRod5} also contains the matching upper bounds when $\nu< 1$, so the crux of Theorem~\ref{thm:main} is the upper bound in \eqref{eq:1arm-main} in the regime when $\nu \geq 1$.
In the special case of $\Z^3$ (with unit weights), corresponding to $\nu =1$, the upper bound $\psi(R) \leq  C \sqrt{\log(R)} R^{-1/2}$ is derived in \cite{DiWi}. For $\Z^\alpha$, $\alpha \geq 4$, the same bound holds without logarithmic pre-factor and is derived in the same reference. As observed in \cite[(1.22)--(1.23)]{DrePreRod5} these bounds hold more generally for $\nu=1$, resp.~$\nu >1$, and are straightforward consequences of the tail asymptotics \eqref{eq:cap-tail} of the cluster capacity observable. The results of \cite{DrePreRod5,DrePreRod8} in fact strongly indicate that the lower bound in \eqref{eq:1arm-main} is sharp below mean-field regime; see \cite[Table~1]{DrePreRod5}. 

More recently, as forecast in \cite{werner2020clusters}, it was proved in \cite{cai2023onearm} that $cR^{-2} \leq \psi(R) \leq CR^{-2}$ on the lattice when $\alpha > 6$ (expectedly the upper-critical dimension based on a formula of Lupu, to which we return below). Even more recently, the upper bounds on $\psi$ were improved in low dimensions. Namely, in \cite{DrePreRod8}, it is shown that $\psi(R) \leq  C {\log \log (R)}^{2/3} R^{-1/2}$ when $\nu=1$ and $\psi(R) \leq  { \log (R)}^{C} R^{-\nu/2}$ for all $1 \leq \nu \leq \frac\alpha{2}$. The result of Theorem~\ref{eq:1arm-main} thus provides up to constant upper bounds in the regime $1 \leq \nu < \frac\alpha{2}$ and in particular for $\nu =1$ (e.g.~on $\Z^3$ with unit weights).

\bigskip
Theorem~\ref{thm:main} is proved by first obtaining a generalization of Lupu's two-point formula \cite{MR3502602}, see \eqref{eq:2point}. This formula is presented in Section~\ref{sec:Lupu}, see Proposition~\ref{P:lupu-gen}, and is of independent interest. It is then used to significantly refine our earlier approach of \cite{DrePreRod8}, which roughly consisted in making rigorous that the cluster $\mathcal{K}^0$ must hit large loops (in the equivalent loop soup picture of the problem) in low dimensions. In fact we will prove the following result, which is effectively a stronger version of Theorem~\ref{thm:main}; we abbreviate $B=B(0,R)$ in the following. 

\begin{The}
\label{eq:explic1armbound} Under the assumptions of Theorem~\ref{thm:main},
for all $R\geq 1$ and $s\leq c$, one has
\begin{equation}
\label{eq:bound1arm+cap}
\begin{split} 
\P\big(0\leftrightarrow \partial B\text{ in }\K^0,\mathrm{cap}(\mathcal{K}^0\cap B) \leq sR^{\nu} \big)\leq R^{-\frac{\nu}{2}}\exp\big(-cs^{-\frac1{\nu}}\big).
\end{split}
\end{equation}
\end{The}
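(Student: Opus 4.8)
The plan is to establish \eqref{eq:bound1arm+cap} directly, from which Theorem~\ref{thm:main} follows: on the complementary event $\{\mathrm{cap}(\K^0\cap B)>sR^\nu\}$ the tail estimate \eqref{eq:cap-tail} gives a contribution $\le C s^{-1/2}R^{-\nu/2}$, so that adding \eqref{eq:bound1arm+cap} and fixing $s$ to a small constant yields $\psi(R)\le CR^{-\nu/2}$, matching the known lower bound. As inputs I would first record the capacity estimates furnished by \eqref{eq:intro_sizeball}--\eqref{eq:intro_Green}, chiefly $\mathrm{cap}(B(0,r))\asymp r^\nu$ together with the lower bounds on the capacity of well-separated sets coming from $g\asymp d^{-\nu}$. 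Then fix the intermediate scale $\rho:=s^{1/\nu}R$, so that $\rho^\nu=sR^\nu$ is exactly the capacity budget in \eqref{eq:bound1arm+cap} and $n:=R/\rho=s^{-1/\nu}$; for $s\le c$ small one has $\rho\le R/2$.

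The heart is a multiscale analysis along the $n$ concentric shells $S_k:=B(0,(k+1)\rho)\setminus B(0,k\rho)$, $0\le k<n$, each of width $\rho$, every one of which the connected cluster must cross on the event in \eqref{eq:bound1arm+cap}. Since the total budget $\rho^\nu$ is the capacity of a single ball of radius $\rho$, the well-separated capacity lower bounds force all but a bounded number of the shells to be crossed \emph{thinly}, i.e.\ with in-shell capacity $<\tfrac12\rho^\nu$, far below the value $\asymp\rho^\nu$ carried by a typical crossing at that scale. Thus the event is contained, up to a choice of the $O(1)$ exceptional shells (a combinatorial factor $n^{O(1)}$, harmless against the exponential to come), in the event that the cluster performs at least $n-O(1)$ successive thin crossings.

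The crux is the conditional thin-crossing estimate: conditionally on the trace of $\K^0$ inside $B(0,k\rho)$ --- which I would access through the domain Markov property of the metric free field and an exploration of the cluster from the origin outward --- the probability that $\K^0$ reaches $\partial B(0,(k+1)\rho)$ with in-shell capacity $<\tfrac12\rho^\nu$ is bounded by a constant $e^{-c}<1$, uniformly in $k$. This is exactly where the generalized two-point formula of Proposition~\ref{P:lupu-gen} enters: it reexpresses the conditional connection probability through an explicit capacity/Green's-function functional of the exposed boundary of the cluster, whose value a thin crossing is forced to make atypically small, so that the crossing probability is separated from $1$ by a \emph{scale-independent} amount. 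This sharpens the loop-hitting mechanism of \cite{DrePreRod8}, where the corresponding per-scale estimate degraded logarithmically and produced the spurious $\log^C R$ factors. Iterating over the $\ge n-O(1)$ thin shells then gives a factor $e^{-c(n-O(1))}$.

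It remains to supply the base rate $\le C\rho^{-\nu/2}$ for the cluster to reach $\partial B(0,\rho)$ at all, which I would obtain by an induction on scale: decomposing according to the capacity of the core $\K^0\cap B(0,\rho)$, the event that it is $\ge\tfrac12\rho^\nu$ costs $\le C\rho^{-\nu/2}$ by \eqref{eq:cap-tail}, while the event that it is $<\tfrac12\rho^\nu$ is \eqref{eq:bound1arm+cap} at the smaller scale $\rho<R$, available by the inductive hypothesis. Multiplying the core factor $\le C\rho^{-\nu/2}=Cs^{-1/2}R^{-\nu/2}$ by the per-scale penalty $e^{-cn}$, and absorbing the surplus $s^{-1/2}$ and the combinatorial $n^{O(1)}$ into the exponential (legitimate since $s^{-1/2}n^{O(1)}e^{-cn}\le e^{-c'n}$ for $s\le c$), yields \eqref{eq:bound1arm+cap}. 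I expect the conditional thin-crossing estimate to be the principal obstacle, for two reasons: the successive crossings are strongly dependent, so the iteration must be realized as a genuine sequential exploration for which the conditioned field retains enough Gaussian structure to apply Proposition~\ref{P:lupu-gen}; and the penalty must be \emph{uniform in the scale}, since only a scale-independent per-shell constant, multiplied over the $\asymp s^{-1/\nu}$ shells, produces the sharp stretched-exponential rate --- any logarithmic degradation would, as in \cite{DrePreRod8}, destroy it and would moreover fail to close the induction with a scale-independent constant.
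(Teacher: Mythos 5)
Your reduction of Theorem~\ref{thm:main} to \eqref{eq:bound1arm+cap} and your combinatorial step (the capacity budget $sR^\nu$ forces all but $O(1)$ of the $n=s^{-1/\nu}$ shells to be crossed with in-shell capacity $<\tfrac12\rho^\nu$, via approximate additivity of capacity for well-separated sets) are sound. The genuine gap is the step you yourself flag as the principal obstacle: the \emph{conditional} thin-crossing estimate, i.e.\ that uniformly over the explored cluster in $B(0,k\rho)$, the conditional probability of crossing $S_k$ thinly is at most $e^{-c}<1$. Nothing in the paper's toolbox, and nothing in your sketch, delivers this. Proposition~\ref{P:lupu-gen} is an identity for the cluster of the single point $0$ under the \emph{unconditioned} field on a graph satisfying \eqref{eq:critpar0}; there is no version of it for a cluster grown from a compact explored set carrying arbitrary nonnegative boundary values, and deriving one is not a routine extension (the proof via Lemmas~\ref{L:link-cap-g-inv} and~\ref{L:cap-transform} hinges on the Doob transform at a single point and on the exact capacity law of $\K^{-t}$, both of which break under conditioning on an exploration). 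Moreover the event $\{\text{cross }S_k\} \cap \{\mathrm{cap}(\K^0\cap S_k)<\tfrac12\rho^\nu\}$ is non-monotone in the field (increasing intersected with decreasing), so the stochastic domination supplied by the exploration's Markov property (nonnegative boundary data on the tips) gives you no upper bound; in particular, uniformity over pasts with arbitrarily large tip values cannot be argued by positivity of the bias. As written, the per-shell estimate is not a lemma you can prove but essentially a restatement of the theorem's difficulty, localized to one shell and made harder by the conditioning.

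It is worth seeing how the paper's proof is engineered precisely to avoid this. The stretched exponential $e^{-cs^{-1/\nu}}$ does \emph{not} come from iterating a per-shell penalty for the GFF cluster. Instead, small capacity of $\K^0$ in an annulus forces $\K^0$ to avoid all \emph{big loops} of the loop soup there (via \eqref{eq:loopiso}); conditioning on those loops is clean because of the Poisson restriction property, not an exploration of the cluster; the product over $\asymp s^{-1/\nu}$ independent events appears at the level of the loop soup (big loops occur independently in $\ell\asymp s^{-1/\nu}$ disjoint balls along any crossing path, giving $\Q(\mathbf{G}^{\mathsf c})\le e^{-c\ell}$ in \eqref{eq:probaGcomp}); and the exponential gain on the good event comes from a \emph{random walk} estimate, $P_0(H_x<H_{\mathcal{O}})\le e^{-c\delta\ell}R^{-\nu}$, fed into the generalized Lupu formula applied on the obstacle-killed graph $\tilde\G_{\mathcal{O}}$ (Lemma~\ref{lem:newformulacap} and \eqref{eq:consequencenewcapformula}). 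Since this produces a bound involving the capacity in a \emph{second}, outer annulus, the paper then closes the argument not by sequential shell crossings but by the recursion of Proposition~\ref{P:t_R-LB} over annuli of iterated-logarithmic widths near $\partial B(R)$. So your proposal is not a rephrasing of the paper's proof with a gap in one lemma; it is a structurally different scheme whose core step (a scale-uniform conditional crossing estimate for a strongly dependent, non-monotone event) is exactly what the paper's loop-soup/obstacle mechanism was designed to circumvent, and you would need a substantially new idea to make it rigorous.
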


The proof of Theorem~\ref{eq:explic1armbound} appears in Section~\ref{sec:one-arm}. Theorem~\ref{thm:main} is readily obtained from Theorem~\ref{eq:explic1armbound}, as we now explain.

\begin{proof}[Proof of Theorem~\ref{thm:main}]
Fix $s >0$ such that the conclusions of Theorem~\ref{eq:explic1armbound} hold. In view of \eqref{eq:cross-proba}, one has that
$$
\psi(R) \leq \P\big(0\leftrightarrow \partial B\text{ in }\K^0, \, \mathrm{cap}(\mathcal{K}^0\cap B) \leq sR^{\nu} \big) +  \P\big( \mathrm{cap}(\mathcal{K}^0) > sR^{\nu} \big),
$$
and the upper bound in \eqref{eq:1arm-main} follows by combining \eqref{eq:bound1arm+cap} and \eqref{eq:cap-tail}. As explained above, the lower bound in \eqref{eq:1arm-main} is known \cite{DrePreRod5}.
\end{proof}

The results of Theorem~\ref{thm:main} can be fruitfully combined with earlier works \cite{DrePreRod8,prevost2023passage,DrePreRod5,GRS21} to exhibit the scaling behavior of various important observables away from the critical point. This effectively boils down to the fact that one can now afford to set $q(\cdot)=C$ when $\nu<\frac\alpha2$ in various contexts; see for instance \cite[(1.6)]{DrePreRod8} or \cite[(6.1)]{DrePreRod5}. 

We give two examples of this.
First, combining Theorem~\ref{thm:main} and \cite[(1.15)]{DrePreRod8} (see also (1.6) therein), one now knows that for all $a\neq0$, if $\nu<\frac{\alpha}{2}$ and if $d=d_{\mathrm{gr}}$ is the graph distance on $\G$, one has 
    \begin{equation}
    \label{eq:nearcriticalvolume}
        c|a|^{-\frac{2\alpha}{\nu}+2}\leq \E[|\K^a|1\{|\K^a|<\infty\}]\leq C|a|^{-\frac{2\alpha}{\nu}+2},
    \end{equation}
where $|\K^a|$ denotes the cardinality of $\K^a \cap G$. For our second example we specialize to the case of $G=\Z^3$, with unit weights. Then as a consequence of Theorem~\ref{thm:main} and \cite[Corollary 1.3]{DrePreRod8}, one has the following: there exists $c\in (0,\infty)$ and for all $\eta\in{(0,1)}$, there exists $C=C(\eta) \in (0,\infty)$ such that for all $e\in{S^2}$, $a\in{\R}$ with $|a|\leq c$, and all $\lambda\geq C$,     
\begin{equation}
    \label{eq:2pointZ3}
        -\frac{\pi}{6}(1+\eta)\frac{\lambda}{\log(\lambda)}\leq \log\bigg(\frac{\tau^{\mathrm{tr}}_a(0,[\lambda \xi e])}{\tau^{\mathrm{tr}}_0(0,[\lambda \xi e])}\bigg)\leq -\frac{\pi}{6}(1-\eta)\frac{\lambda}{\log(\lambda)},
    \end{equation}
where $\xi=|a|^{-2}$. In depending only on $\lambda$, the bounds \eqref{eq:2pointZ3} give striking evidence of the presumed rotational invariance in the scaling limit at criticality.

\bigskip

\noindent\textit{Note.} During the writing of this article we learned that Zhenhao Cai and Jian Ding obtain the result of Theorem \ref{thm:main} on $\Z^3$ independently, including higher-dimensional Euclidean lattices.

\section{A generalization of Lupu's formula} \label{sec:Lupu}

In this section we derive a key identity that generalizes a result of~\cite{MR3502602}; see Remark~\ref{R:lupu-gen},~\ref{R:lupu} below. We will actually later use this formula on certain subgraphs of $\tilde{\G}$ instead of directly on $\tilde{\G}$, see the proof of Lemma~\ref{lem:newformulacap}, which have a positive killing measure.  Accordingly, throughout this section, we work within the  framework of general transient weighted graphs $\G = (G, \lambda,\kappa)$ with killing measure $\kappa$, thus extending the setup of Section~\ref{sec:intro}. Contrary to the rest of this article, the results of this section actually hold without the conditions \eqref{eq:ellipticity}, \eqref{eq:intro_sizeball} and \eqref{eq:intro_Green}, but in Proposition~\ref{P:lupu-gen} below we will still assume that \eqref{eq:critpar0} is satisfied.

We first introduce some more notation and recall a few known facts, and refer for instance to \cite[Section~2]{DrePreRod3} for more details in this setup. To construct $\tilde{\G}$ on graphs with a positive killing measure, in addition to the closed intervals between edges, we also add for each $x\in{G}$ with $\kappa_x>0$ an interval of length $1/(2\kappa_x)$ starting at $x$, which is closed at $x$ and open on the other side of the interval. Recall that $G$, the vertex set of $\G$, is endowed with a metric $d$.  We write $B(x,R)$ for the subset of the corresponding metric graph $\tilde{\G}$ obtained as the union of all closed intervals between two vertices which are both in the closed ball $\{ y \in {G}: d(x,y) \leq R\}$ of radius $R \geq 0$ around $x \in G$, and all half-open intervals starting at a vertex in the previous ball and with positive killing measure. We write $\partial B(x,R)$ for the set of all the vertices $y$ in $B(x,R)$ such that there is an edge starting in $y$ not included in $B(x,R)$. Recall that $0$ denotes an arbitrary point in $\tilde\G$, which is for instance the origin of the lattice in case $G=\Z^\alpha$, and abbreviate $B(R)=B(0,R)$ and $\partial B(R)=\partial B(0,R)$.

A set $K\subset \tilde{\mathcal{G}}$ is \emph{bounded} if $K\cap G$ is a bounded (or equivalently, finite) set, and \emph{compact} if it is closed, for the natural geodesic distance which assigns length $1$ to each edge, and bounded. We also denote by $\partial K$ the topological boundary of a set $K\subset\tilde{\G}$ for this distance. Note that any half-open interval in $\tilde{\G}$ starting in $x$ with $\kappa_x>0$ is bounded but not compact.

Recall that the random walk $X$ on $\G$  naturally extends to a Markov process on $\tilde\G$ with continuous trajectories. Its canonical law is denoted by $P_x^{\tilde\G}= P_x$ when starting at $ x \in \tilde\G$. 
We denote by $g_U^{\tilde{\G}}= g_U$ for closed $U \subset \tilde{\G}$ the Green's function killed on the set $U$, that is $g_U(x,y)=E_x[\ell_y(H_U)]$, where $H_U=\inf\{t\geq0:\,X_t\in{U}\}$ and $(\ell_y(t))_{y\in{\tilde{\G}},t\geq0}$ is the family of local times associated to $X$. If $x,y\in{G}$ and $U \subset G$, then $g_U(x,y)$ is just equal to $\lambda_y^{-1}$ times the average number of time the discrete time random walk on $G$ started in $x$ visits $y$ before entering $U$. Moreover, we abbreviate $g_U(x)= g_U(x,x) $ for all $x \in \tilde{\G}$, and define the capacity of a finite set $K\subset{G}$ by
\begin{equation}
\label{eq:defcap}
\begin{split} 
\mathrm{cap}(K)=\mathrm{cap}_{\tilde{\mathcal{G}}}(K)\stackrel{\text{def.}}{=}\sum_{x\in{K}}e_K(x),\text{ where }e_K(x)\stackrel{\text{def.}}{=}\lambda_xP_x(\tilde{H}_K=\infty),
\end{split}
\end{equation}
and $\tilde{H}_K$ is the return time to $K$, that is the first time after its first jump that the random walk on $G$ hits $K$. One can extend the definition \eqref{eq:defcap} to closed sets $K\subset\tilde{\G}$ with finitely many connected components, see \cite[(2.20) and (2.27)]{DrePreRod3}, and if $K$ is also compact, then the equilibrium measure $e_K$ is then supported on the finite set $\partial K$. Therefore, in the particular case $\partial K\subset G$, the capacity of $K$ is simply equal to the capacity of $\partial K$ given by \eqref{eq:defcap}. One of the main interests of the capacity is that by \cite[(1.57)]{MR2932978}, which can easily be extended to infinite graphs and to the metric graph, for all compact sets $K\subset\tilde{\G}$ with finitely many connected components, one has
\begin{equation}
\label{eq:hittingcap}
\begin{split} 
P_x(H_K<\infty)=\sum_{y\in{{\partial} K}}g(x,y)e_K(y)\text{ for all }x\in{\tilde{\G}}.
\end{split}
\end{equation}
The Gaussian free field $\varphi=(\varphi_x)_{x\in \tilde{\G}}$ with canonical law $\P^{\tilde{\G}}=\P$ is the mean zero centered Gaussian field with covariance function $g(\cdot,\cdot)$. As to the genericity of the condition \eqref{eq:critpar0}, one knows for instance that 
\begin{equation}
\label{T1_signsat}
\begin{array}{c}
\text{ if }\mathrm{cap}(K)=\infty\text{ for all infinite and connected set }K\subset G,\text{ or if }
\\\text{$\G$ is a vertex-transitive graph (with unit weights), then it satisfies \eqref{eq:critpar0}},
\end{array}
\end{equation}
see Theorem 1.1,(1) and Corollary~1.2 in \cite{DrePreRod3}; see also~\cite[Proposition 8.1]{Pre1} for examples of graphs not verifying \eqref{eq:critpar0}. We can now state the announced formula. Recall that $g_U(x)= g_U(x,x) $ denotes the on-diagonal Green's function killed on $U \subset \tilde{\G}$ and below $U$ is often the origin or its cluster $\mathcal{K}^0$ in $\{\varphi \geq 0\}$ as defined in \eqref{eq:introKa}.

\begin{prop} \label{P:lupu-gen} 
For any weighted graph $\G=(G,\lambda,\kappa)$ satisfying \eqref{eq:critpar0}, all $x \in G$ and $0< t  \leq  g_{\{0\}}(x)$, one has
\begin{equation}
\label{eq:lupu-gen}
\P \big( g_{\{0\}}(x)- g_{\mathcal{K}^0}(x) \geq t\big)= \frac1\pi \arctan\bigg( \frac{g(0,x)}{\sqrt{t g(0)}}\bigg).
\end{equation}
\end{prop}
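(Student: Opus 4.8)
The plan is to read both Green's functions in \eqref{eq:lupu-gen} as conditional variances of $\varphi_x$, and thereby reduce the statement to identifying the law of the single random variable $V:=g_{\K^0}(x)$. First I would record the deterministic identity $g_{\{0\}}(x)=g(x,x)-g(0,x)^2/g(0)=\mathrm{Var}(\varphi_x\mid\varphi_0)$. On the other hand, by the strong Markov property of the free field on $\tilde\G$ across $\partial\K^0$---where one uses the metric-graph-specific fact that $\partial\K^0\subset\{\varphi=0\}$, so that conditionally on $\K^0$ the field on $\tilde\G\setminus\K^0$ is a free field with zero boundary conditions on $\partial\K^0$---one obtains that, on $\{x\notin\K^0\}$, $\varphi_x$ is conditionally $N(0,V)$ with $V=g_{\K^0}(x)$, and that $\varphi_0$ and $\varphi_x$ are conditionally independent given $\K^0$. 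Since $\{0\}\subset\K^0$ whenever $\varphi_0\geq0$, we have $0\leq\sigma:=g_{\{0\}}(x)-V\leq g_{\{0\}}(x)$ on $\{\varphi_0\geq 0\}$, while on $\{\varphi_0<0\}$ one has $\K^0=\emptyset$, $V=g(x,x)$ and $\sigma=-g(0,x)^2/g(0)<0$; hence $\{\sigma\geq t\}$ is non-degenerate exactly for $t\in(0,g_{\{0\}}(x)]$.

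Next I would check the two endpoints, which also form the backbone of the computation. As $t\downarrow 0$ the claimed right-hand side tends to $\tfrac1\pi\cdot\tfrac\pi2=\tfrac12=\P(\varphi_0>0)=\P(\sigma>0)$, whereas the extreme value $t=g_{\{0\}}(x)$ forces $V=0$, i.e.\ $\{x\in\K^0\}$, and there the right-hand side equals $\tfrac1\pi\arctan\big(g(0,x)/\sqrt{g(0)g(x,x)-g(0,x)^2}\big)=\tfrac1\pi\arcsin(\rho)$ with $\rho=g(0,x)/\sqrt{g(0)g(x,x)}$. This is precisely Lupu's two-point (sign-cluster) formula $\P(x\in\K^0)=\tfrac1\pi\arcsin(\rho)$. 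Thus \eqref{eq:lupu-gen} says that the atom of $V$ at $0$ carries Lupu's connectivity probability, and the genuinely new content is the absolutely continuous part of the law of $V$ on $(0,g_{\{0\}}(x))$, of total mass $\tfrac12-\tfrac1\pi\arcsin(\rho)$.

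The heart of the proof is to compute the full law of $V=g_{\K^0}(x)$, and this is the step I expect to be the main obstacle. My plan is to obtain it from the isomorphism coupling the free field on $\tilde\G$ with random interlacements (equivalently, the loop soup of intensity $\tfrac12$), under which the cluster $\K^0$ together with its equilibrium data becomes explicit and $g_{\K^0}(x)$ can be expressed as a first-passage functional of an auxiliary one-dimensional excursion. The $\arctan$---the tail of a Cauchy law---should then emerge from an arcsine-type computation, exactly as Lupu's $\arcsin$ arises for the connectivity indicator, the difference being that one now tracks the continuous reduction $g_{\{0\}}(x)-g_{\K^0}(x)$ rather than only the event $\{g_{\K^0}(x)=0\}$. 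The delicate points are to extract, from the isomorphism at criticality (i.e.\ under \eqref{eq:critpar0}), the joint law of the connectivity indicator and of the conditional variance, and to justify the metric-graph Markov property and the identification of $g_{\K^0}(x)$ when $\K^0$ is an unbounded random closed set.

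As an independent corroboration of the mechanism and of the exact constants, I would run the analogous computation on the graph $\G^{(m)}$ obtained from $\G$ by inserting a killing measure of mass $m\geq0$ at $x$ (equivalently, gluing a half-open cable at $x$). The rank-one resolvent identity gives $g^{(m)}(0,x)=g(0,x)/D$, $g^{(m)}(x,x)=g(x,x)/D$ and $g^{(m)}(0)g^{(m)}(x,x)-g^{(m)}(0,x)^2=\big(g(0)g(x,x)-g(0,x)^2\big)/D$ with $D=1+m\,g(x,x)$, so that Lupu's formula on $\G^{(m)}$ reads $\P(x\in\K^{0,(m)})=\tfrac1\pi\arctan\big(g(0,x)/\sqrt{t\,g(0)}\big)$ with $t=D\,g_{\{0\}}(x)$. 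This reproduces the right-hand side of \eqref{eq:lupu-gen} with $t$ identified as an effective killed self-variance at $x$; since genuine killing only yields $D\geq1$, hence $t\geq g_{\{0\}}(x)$, it also makes transparent that the regime $t\in(0,g_{\{0\}}(x)]$ of the proposition is exactly the one inaccessible by this shortcut and requiring the distributional argument of the previous paragraph.
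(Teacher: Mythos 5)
Your reduction is sound as far as it goes: reading $g_{\{0\}}(x)$ and $g_{\mathcal{K}^0}(x)$ as conditional variances of $\varphi_x$ via the spatial Markov property across $\partial\mathcal{K}^0$ (where the field vanishes on the metric graph) is correct, and both endpoint checks are accurate --- $t\downarrow 0$ gives $\tfrac12=\P(\varphi_0>0)$, and $t=g_{\{0\}}(x)$ recovers Lupu's formula $\P(x\in\mathcal{K}^0)=\tfrac1\pi\arcsin\big(g(0,x)/\sqrt{g(0)g(x)}\big)$. But the proof stops exactly where it would have to begin: the law of $V=g_{\mathcal{K}^0}(x)$ on $(0,g_{\{0\}}(x))$, i.e.\ the absolutely continuous part of total mass $\tfrac12-\tfrac1\pi\arcsin\big(g(0,x)/\sqrt{g(0)g(x)}\big)$, is never computed, and this is the entire content of \eqref{eq:lupu-gen} beyond what was already known. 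The passage asserting that, under the loop-soup/interlacement isomorphism, $g_{\mathcal{K}^0}(x)$ ``can be expressed as a first-passage functional of an auxiliary one-dimensional excursion'' and that the $\arctan$ ``should then emerge from an arcsine-type computation'' is a hope, not an argument: no identity is provided that turns $g_{\mathcal{K}^0}(x)$, a functional of a random compact set, into an observable with a known law, and you yourself flag this as the main obstacle. The closing consistency check (killing measure $m$ at $x$) is fine but, as you note, it only reaches the regime $t\geq g_{\{0\}}(x)$, which is excluded from the proposition.

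For comparison, the paper fills precisely this gap with two concrete ingredients. First, it uses the Markov decomposition at $x$, $\varphi=\psi+\varphi_x h(\cdot)$ with $h(y)=P_y(H_x<\infty)$, and a Doob transform of $\tilde\G_x=\tilde\G\setminus\{x\}$ by $h$; Lemma~\ref{L:link-cap-g-inv} then gives the exact identity $\mathrm{cap}_{\tilde\G_x^h}(\iota(K))=\frac1{g_K(x)}-\frac1{g(x)}$, which is the missing bridge converting the Green's-function decrement into a \emph{cluster-capacity} observable. Second, under the transformed measure the cluster $\mathcal{K}^{-\varphi_x h(\cdot)}(\psi)$ becomes a cluster at \emph{constant} level $-t$ once one conditions on $\varphi_x=t$, so its capacity has an explicitly known density (Lemma~\ref{L:cap-transform}, resting on the exact cluster-capacity law of \cite{DrePreRod3}); a Gaussian average over $\varphi_x$ then produces the $\arctan$ in \eqref{eq:lupu-gen}. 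Your conditional-variance reformulation is in the same spirit as the first step (Markov property at $x$ rather than at $\mathcal{K}^0$), but without analogues of these two lemmas the problem has only been restated, not solved.
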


Before delving into the proof, we make a few comments that shed some light on \eqref{eq:lupu-gen}.

\begin{Rk}
\phantomsection\label{R:lupu-gen}
\begin{enumerate}[label={\arabic*)}]
\item\label{R:lupu-gen-altern} (Alternative formulations). Formula \eqref{eq:lupu-gen} can be equivalently recast as follows. For all $x \in G$ and all $s \in (g(x)-g_{\{0\}}(x), g(x)]$, applying \eqref{eq:lupu-gen} with $t= s- ( g(x)-g_{\{0\}}(x))$, which satisfies $0< t  \leq  g_{\{0\}}(x)$ as required, 
 one finds that
\begin{multline}
\label{eq:lupu-gen-alt}
\P \big( g(x)- g_{\mathcal{K}^0}(x) \geq s\big) \stackrel{\eqref{eq:lupu-gen}}{=}  \frac1\pi \arctan\bigg( \frac{g(0,x)}{\sqrt{t g(0)}}\bigg) \bigg \vert_{t= s- ( g(x)-g_{\{0\}}(x))} \\= \frac1\pi \arcsin\bigg( \frac{g(0,x)}{\sqrt{t g(0) + g(0,x)^2}}\bigg) \bigg \vert_{t= s- ( g(x)-g_{\{0\}}(x))} = \frac1\pi \arcsin\bigg( \frac{g(0,x)}{\sqrt{s g(0)}}\bigg);
\end{multline}
here, the first equality in the second line follows using the trigonometric identity $\arctan(\frac{\alpha}{\beta})= \arcsin(\frac{\alpha}{\sqrt{\beta^2 + \alpha^2}})$ valid for all $\alpha,\beta >0$, and the last equality follows upon observing that $ g(x)-g_{\{0\}}(x) = P_x(H_0<\infty) g(0,x)= \frac{g(0,x)^2}{g(0)}$. Another alternative formulation of \eqref{eq:lupu-gen}, which will be useful in the proof of Lemma~\ref{lem:newformulacap} below, can be obtained from the following alternative description of the quantity appearing on the left-hand side of \eqref{eq:lupu-gen}, which is an easy consequence of the strong Markov property at time $H_{\K^0}$: on the event $\K^0\neq\emptyset$,
\begin{equation}
\label{eq:alt-green-diff}
\begin{split} 
    g_{\{0\}}(x)-g_{\K^0}(x)=E_x\big[g_{\{0\}}(X_{H_{\K^0}},x)1\{H_{\K^0}<H_{\{0\}}\}\big].
\end{split}
\end{equation}
\item \label{R:lupu} (Lupu's formula). Applying \eqref{eq:lupu-gen-alt} with $s= g(x)$ and observing that $g_{\mathcal{K}^0}(x) \geq 0$ with equality if and only if $x \in \mathcal{K}^0$, one immediately deduces that 
\begin{equation}
\label{eq:2point}
\P\big( 0 \stackrel{\{\varphi > 0\}}{\longleftrightarrow} x\big) = \P\big( x \in \mathcal{K}^0\big)  = \frac1\pi \arcsin\bigg( \frac{g(0,x)}{\sqrt{g (x) g(0)}}\bigg),
\end{equation}
thus recovering \cite[Proposition 5.2]{MR3502602}, see also Proposition~2.1 therein; the discrepancy with \cite[display (5.1)]{MR3502602}, where the pre-factor is $\frac2\pi$, is owed to the fact that the latter deals with connection via a loop cluster, and the (independent) cost to have $\text{sign}(\varphi_0)=1$ produces the extra factor $\frac12$.
\end{enumerate}
\end{Rk}

Proposition~\ref{P:lupu-gen} can be proved in several ways. Here we use an approach that combines 
the integrability of the cluster capacity observable and Doob transforms, see \cite{Pre1} in the context of metric graphs, and also \cite{MR3936156}.

\begin{proof}[Proof of Prop.\ \ref{P:lupu-gen}]
As explained in \cite[Remark~2.2]{DrePreRod3}, one can replace any graph with a positive killing measure by a graph with zero killing measure such that the diffusions on the two corresponding metric graphs coincide, and thus the corresponding Gaussian free fields as well. Throughout the proof, we will thus assume for simplicity w.l.o.g.\ that the killing measure on $\G$ is equal to zero. We will prove \eqref{eq:lupu-gen-alt}, from which \eqref{eq:lupu-gen} follows by reverting the arguments of Remark~\ref{R:lupu-gen}.~\ref{R:lupu-gen-altern}.
If $x=0$ then $g_{\{0\}}(x)=0$ and there is nothing to show. We assume henceforth that $x \neq 0$. We write $$\tilde\G_x \stackrel{\text{def.}}{=}\tilde{\G} \setminus \{x\},$$ which is naturally viewed as the metric graph associated to the graph $\G_x$ obtained from $\G=(G,\lambda)$ by removing $x$ from the vertex set $G$, retaining the same weights $\lambda_{y,z}$ for $y,z \in G \setminus \{x\}$ and adding a killing measure $\kappa_y=\lambda_{y,x}$ for each $y \in G$ such that $\lambda_{y,x} > 0$. The half-open interval of length $\frac1{2\kappa_y}=\frac1{2\lambda_{y,x}}$ on the metric graph $\tilde\G_x$ for $y\sim x$ is identified with the closed interval between $x$ and $y$ in $\tilde{\G}$, from which we removed $x$. Note that the diffusion on $\tilde{\G}_x$ then has the same law as the diffusion on $\tilde{\G}$ killed on hitting $x$. By the Markov property for the field one can decompose $\varphi$ under $\P=\P^{\tilde{\G}}$ as
\begin{equation}
\label{eq:markov-x}
\varphi_{\cdot}= \psi_{\cdot}+ \varphi_x h(\cdot)
\end{equation}
(the equality in \eqref{eq:markov-x} defines the field $\psi$), where $\psi_x =0$,  $(\psi_y)_{y \in \tilde{\G}_x}$ has law $\P^{\tilde{\G}_x}$, is independent of $\varphi_x$ and
\begin{equation}
\label{eq:doob-h}
h(y)= P_y(H_x< \infty), \quad y \in \tilde\G.
\end{equation}

The function $h$ in \eqref{eq:doob-h} is harmonic on $\tilde\G_x$ in the sense of \cite[Definition 5.1]{Pre1}, see also (5.2) and (5.3) therein, and we can thus consider the Doob transform $\tilde\G_x^h$
of $\tilde\G_x$ by $h$. By definition, this is the metric graph associated to the graph with same vertex set as $\G_x$ but modified weights $\lambda_{y,z}^h= h(y)h(z) \lambda_{y,z}$ and killing measure $\kappa_y^h=h(y) \kappa_y$. There is a natural isomorphism $\iota: \tilde\G_x \to \tilde\G_x^h $, which acts as identity map on the vertices of $\G_x$ and otherwise stretches the cables `harmonically', see \cite[(5.4)]{Pre1}.

The diffusion on $\tilde\G_x^h$, with law $P_{\iota(y)}^{\tilde\G_x^h}$, $y \in \tilde\G_x$, can be identified with the image under $\iota$ of a time-change of $X$ under $P_{y}^{\tilde\G_x}$, specified as follows. If $(P_t)_{t \geq 0}$ denotes the semigroup of the diffusion $X$ on $\tilde\G_x$ (with law $P_{\cdot}^{\tilde\G_x}$) then the time-changed process in question has semigroup $h^{-1}P_t(h \cdot)$, see \cite[(5.7)]{Pre1}. In more concrete terms, the law $P_{\iota(y)}^{\tilde\G_x^h}$, $y \in \tilde\G_x$, can be viewed up to time-change as the image under $\iota$ of the law of $X$ under $P_y^{\tilde{\G}_x}$ conditionally on $X$ being killed, see \cite[Lemma~5.7]{Pre1}, that is the law of $(X_{t})_{0\leq t < H_x}$ under the measure $P_y^{\tilde \G}(\, \cdot \,  | H_x < \infty)$.

The following result is key. The assumption of connectedness could be weakened but will be sufficient for our purposes.

\begin{Lemme} \label{L:link-cap-g-inv}
For compact connected $K \subset \tilde\G_x $ and $h$ as given by \eqref{eq:doob-h},
\begin{equation}
\label{eq:link-cap-g-inv}
\textnormal{cap}_{\tilde\G_x^h} (\iota(K))= \frac1{g_K(x)} -\frac1{g(x)}.
\end{equation}
\end{Lemme}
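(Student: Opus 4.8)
The plan is to show that both sides of \eqref{eq:link-cap-g-inv} equal the single quantity $\lambda_x\,P_x(H_K<\tilde H_x<\infty)$, the $\lambda_x$-weighted probability that the walk on $\tilde\G$ started at $x$ hits $K$ and afterwards returns to $x$. Once both sides are brought to this common form, the identity is immediate.

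I would first treat the right-hand side by rewriting reciprocal on-diagonal Green's functions as capacities of the singleton $\{x\}$. Applying \eqref{eq:hittingcap} with $K=\{x\}$ gives $1=P_x(H_{\{x\}}<\infty)=g(x)\,\mathrm{cap}(\{x\})$, so $g(x)^{-1}=\mathrm{cap}(\{x\})=\lambda_x P_x(\tilde H_x=\infty)=\lambda_x(1-P_x(\tilde H_x<\infty))$. The same identity applied to the diffusion killed on $K$ (its Green's function is $g_K$, and the weight at $x\notin K$ is unchanged) yields $g_K(x)^{-1}=\lambda_x(1-P_x(\tilde H_x<H_K))$, since the killed walk returns to $x$ exactly when the original walk does so strictly before hitting $K$. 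Subtracting and noting that $\{H_K<\tilde H_x<\infty\}=\{\tilde H_x<\infty\}\setminus\{\tilde H_x<H_K\}$ gives $g_K(x)^{-1}-g(x)^{-1}=\lambda_x P_x(H_K<\tilde H_x<\infty)$.

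For the left-hand side I would use the probabilistic description of $\tilde\G_x^h$ recalled before the lemma. Capacity is invariant under time change, and up to time change the $\tilde\G_x^h$-diffusion is the $\iota$-image of the $X$-diffusion conditioned on $\{H_x<\infty\}$, with ``being killed'' corresponding to hitting $x$. Writing the equilibrium measure via hitting probabilities of the $\tilde\G_x^h$-walk thus gives $\mathrm{cap}_{\tilde\G_x^h}(\iota(K))=\sum_{y\in\partial K}\lambda_y^h\,P_y(H_x<\tilde H_K\mid H_x<\infty)$. Two computations feed in: harmonicity of $h$ on $G\setminus\{x\}$ (recall $\kappa\equiv0$ on $\G$ after the reduction at the start of the proof) together with $h(x)=1$ gives $\lambda_y^h=\lambda_y h(y)^2$ at every vertex $y\ne x$; and since $\{H_x<\tilde H_K\}\subseteq\{H_x<\infty\}$ one has $P_y(H_x<\tilde H_K\mid H_x<\infty)=P_y(H_x<\tilde H_K)/h(y)$. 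Therefore $\mathrm{cap}_{\tilde\G_x^h}(\iota(K))=\sum_{y\in\partial K}\lambda_y\,h(y)\,P_y(H_x<\tilde H_K)$.

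It remains to identify this last sum with $\lambda_x P_x(H_K<\tilde H_x<\infty)$, which is the crux and the step I expect to be hardest. The key input is the reversibility identity $\lambda_x P_x(X_{H_K}=y,\,H_K<\tilde H_x)=\lambda_y P_y(H_x<\tilde H_K)$ for $y\in\partial K$: time-reversing a path from $x$ to its first entrance point $y$ of $K$ that avoids $x$ in between produces a path from $y$ to $x$ avoiding $K$ in between, and detailed balance exchanges the weight $\lambda_x$ for $\lambda_y$. Multiplying by $h(y)=P_y(H_x<\infty)$, summing over $y\in\partial K$, and applying the strong Markov property at $H_K$ gives $\sum_{y\in\partial K}\lambda_y h(y)P_y(H_x<\tilde H_K)=\lambda_x E_x[h(X_{H_K})\mathbf 1\{H_K<\tilde H_x\}]=\lambda_x P_x(H_K<\tilde H_x<\infty)$, matching the right-hand side. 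The delicate point is to run this reversibility argument for the metric-graph diffusion rather than the discrete skeleton, and to allow $\partial K$ to meet the interiors of cables, where the equilibrium measure may be supported but the naive vertex weight $\lambda_y$ must be replaced by its metric-graph analogue; I would therefore phrase reversibility intrinsically, as an identity of measures on $\partial K$ that can be integrated against the harmonic function $h$ for general compact connected $K$, with the case $\partial K\subset G$ and \eqref{eq:defcap} serving only to fix notation. Secondary care is needed to justify that the time change relating the $\tilde\G_x^h$- and conditioned $X$-diffusions leaves the capacity unchanged, and for the weight computation $\lambda_y^h=\lambda_y h(y)^2$, which must correctly absorb the killing contribution $\kappa_y=\lambda_{y,x}$ at neighbours of $x$.
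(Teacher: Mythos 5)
Your proposal is correct, and its first half coincides with the paper's proof: both express $\textnormal{cap}_{\tilde\G_x^h}(\iota(K))$ through the equilibrium measure, use the weight identity $\lambda^h_{\iota(y)}=h(y)^2\lambda_y$ together with the description of the $\tilde\G_x^h$-diffusion as the walk conditioned on $\{H_x<\infty\}$ to arrive at $\sum_{y\in\partial K}\lambda_y h(y)P_y(H_x<\tilde H_K)$, and then invoke the same reversibility identity $\lambda_y P_y(H_x<\tilde H_K)=\lambda_x P_x(H_K<\tilde H_x,\,X_{H_K}=y)$. You diverge only in the endgame: the paper continues with a last-exit decomposition at $x$, converting $P_x(H_K<\tilde H_x,\,X_{H_K}=y)$ into $P_x(H_K<\infty,\,X_{H_K}=y)/(g_K(x)\lambda_x)$ and summing against $h(y)=g(x,y)/g(x)$ to land directly on $(g(x)-g_K(x))/(g(x)g_K(x))$; you instead evaluate the sum by the strong Markov property as $\lambda_x P_x(H_K<\tilde H_x<\infty)$ and separately show that the right-hand side equals this same quantity via the classical return-probability formulas $1/g(x)=\lambda_x P_x(\tilde H_x=\infty)$ and $1/g_K(x)=\lambda_x\bigl(1-P_x(\tilde H_x<H_K)\bigr)$. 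The two endgames are equivalent in content (the last-exit decomposition is precisely the identity underlying your formula for $1/g_K(x)$), so this is a cosmetic rather than structural difference, and your ``meet in the middle'' version is arguably a bit more symmetric. One point where the paper is more concrete than you: for compact connected $K$ whose boundary meets the interiors of cables, it reduces to the case $\partial K\subset G\setminus\{x\}$ by network equivalence, adding the boundary points of $K$ as vertices of $\G_x$ and $\G_x^h$, rather than rephrasing reversibility intrinsically on the metric graph as you propose; both routes work, but the reduction is simpler and you could adopt it verbatim.
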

The proof of Lemma~\ref{L:link-cap-g-inv} appears below in the present section. 
Observe now that for arbitrary $0< s < g(x)$, the occurrence of the event $\{ s< g(x)- g_{\mathcal{K}^0}(x)  < g(x) \}$ implies that $g_{\mathcal{K}^0}(x) >0$, whence $x \notin \mathcal{K}^0 $, i.e.~$\mathcal{K}^0$ is contained in  $\tilde\G_x$ (and furthermore compact). Thus, Lemma~\ref{L:link-cap-g-inv} applies on this event with the choice $K= \mathcal{K}^0$ (recall that $\mathcal{K}^0$ is bounded $\P$-a.s.\ under \eqref{eq:critpar0}), yielding that
\begin{align}
\label{eq:lupu-gen-pf1}
\begin{split}
\P \big(s< g(x)- g_{\mathcal{K}^0}(x)  < g(x)\big) &= \P \big(\textstyle \frac1{g(x)-s}<  \frac1{g_{\mathcal{K}^0}(x)}  < \infty \big) \\ &\hspace{-.4em}\stackrel{\eqref{eq:link-cap-g-inv}}{=} \P\Big(\textstyle \frac1{g(x)-s} - \frac1{g(x)}<  \textnormal{cap}_{\tilde\G_x^h} (\iota(\mathcal{K}^0))  < \infty \Big).
\end{split}
\end{align}
The probability involving the capacity appearing in \eqref{eq:lupu-gen-pf1} can be explicitly computed, as we now explain. To do so, we use \eqref{eq:markov-x} to view $\mathcal{K}^0 = \mathcal{K}^0(\varphi)= \mathcal{K}^{-\varphi_x h(\cdot)}(\psi)$ and condition on $\varphi_x$. Recall that $\psi$  is independent of $\varphi_x$ under $\P(=\P^{\tilde\G})$ and has law $\P^{\tilde\G_x}$. Now, importantly, by \cite[(5.9)]{Pre1} applied with $\tilde\G_x$ in place of $\tilde\G$, $(\psi_y)_{y\in \tilde\G_x}$ has the same law as $(h(y)\varphi_{\iota(y)})_{y\in \tilde{\G}_x}$ under $\P^{\tilde\G_x^h}$. In particular, this implies that for all $t \in \mathbb{R}$, the set $\iota(\mathcal{K}^{-t h(\cdot)}(\psi))$ has the same law under $\P^{\tilde\G}$ as $\mathcal{K}^{-t}$ under $\P^{\tilde\G_x^h}$. Hence, all in all the probability in the second line of \eqref{eq:lupu-gen-pf1} can be recast as 
\begin{equation}
\label{eq:lupu-gen-pf2}
\frac{1}{\sqrt{2\pi g(x)}}\int_{-\infty}^{\infty} \P^{\tilde\G_x^h}\Big(\textstyle \frac1{g(x)-s} - \frac1{g(x)}<  \textnormal{cap}_{\tilde\G_x^h} (\mathcal{K}^{-t})  < \infty \Big) e^{-\frac{t^2}{2g(x)}} \, {\rm d}t,
\end{equation}
where the integral over $t$ corresponds to averaging over $\varphi_x$. The merit of the rewrite \eqref{eq:lupu-gen-pf2} is that the cluster of $0$ is now at constant height $-t$, cf.~\eqref{eq:lupu-gen-pf1}. Moreover, the following holds.

\begin{Lemme}\label{L:cap-transform}
The distribution of the random variable $\textnormal{cap}_{\tilde\G_x^h} (\mathcal{K}^{-t})1\{ \textnormal{cap}_{\tilde\G_x^h} (\mathcal{K}^{-t}) \in (0,\infty)\}$ under $\P^{\tilde\G_x^h}$ has density given by
$$
\frac{1}{2\pi u \sqrt{ u \frac{ g_{\{x\}}(0)}{h(0)^{2}}  -1}} e^{-\frac{t^2u}{2}} 1_{\big\{ u \geq \frac{h(0)^{2}}{g_{\{x\}}(0)}\big\}}
$$
with respect to Lebesgue measure. 
\end{Lemme}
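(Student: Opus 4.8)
The plan is to read the statement as an intrinsic assertion about the transient weighted graph $\G' \stackrel{\text{def.}}{=}\tilde{\G}_x^h$: the random variable in question is precisely the capacity of the connected component of the origin $0$ in the excursion set $\{\varphi\geq -t\}$ of the Gaussian free field on $\G'$. First I would record the value of the on-diagonal Green's function of $\G'$ at $0$. Since the Doob transform rescales the Green's function by $h$, one has $g^{\G'}(0,0)=g_{\{x\}}(0)/h(0)^2=:\gamma$, and consequently $\mathrm{cap}_{\G'}(\{0\})=1/\gamma=h(0)^2/g_{\{x\}}(0)$. As the cluster always contains $0$ whenever its capacity is positive, monotonicity of the capacity forces $\mathrm{cap}_{\G'}(\K^{-t})\geq \mathrm{cap}_{\G'}(\{0\})=1/\gamma$, which already accounts for the indicator $1\{u\geq h(0)^2/g_{\{x\}}(0)\}$ and for the appearance of $t^2$ (the level is $-t$, so only $t^2$ can enter). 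It therefore suffices to prove the following intrinsic claim: on any transient graph satisfying \eqref{eq:critpar0}, with a marked origin of on-diagonal Green's function $\gamma$, the capacity of the level-$a$ cluster of the origin, restricted to $(0,\infty)$, has density $\frac{1}{2\pi u\sqrt{\gamma u-1}}\,e^{-a^2u/2}$; the Lemma is the case $a=-t$.

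I would establish this claim in two steps, isolating the $a$-independent profile from the Gaussian weight. The base case $a=0$ is the precise (integrable) form of the cluster-capacity observable underlying \eqref{eq:cap-tail}: the law of $\mathrm{cap}(\K^0)$ on $(0,\infty)$ has density $\frac{1}{2\pi u\sqrt{\gamma u-1}}$, which I would take from \cite{DrePreRod5,Pre1} (the graph $\G'$ inherits \eqref{eq:critpar0} from $\tilde{\G}$, so its level-$0$ cluster of $0$ is a.s.\ bounded). As a consistency check, the substitution $\cos\theta=(\gamma u)^{-1/2}$ turns this density into the uniform law $\tfrac1\pi\,\mathrm{d}\theta$ on $(0,\tfrac\pi2)$ --- an arcsine law, in keeping with Lupu's formula \eqref{eq:2point} --- with total mass $\tfrac12=\P(\varphi_0\geq0)$ and tail $\sim \tfrac{1}{\pi\sqrt\gamma}u^{-1/2}$, consistent with \eqref{eq:cap-tail}.

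The second step is the passage from level $0$ to level $a$, which I expect to produce exactly the Gaussian factor $e^{-a^2u/2}$ on the capacity marginal. The mechanism I would use is a Cameron--Martin shift along the equilibrium potential: conditionally on the cluster being a given compact connected set $K\ni 0$, the field decomposes through $h_K(\cdot)=P_\cdot(H_K<\infty)$ (equal to $1$ on $K$, harmonic off $K$), whose Dirichlet energy equals $\mathrm{cap}(K)$. Shifting the field by $a\,h_K$ --- a legitimate shift since $\mathrm{cap}(K)<\infty$ --- carries the level-$0$ excursion picture towards the level-$a$ one and contributes the Radon--Nikodym density $\exp\!\big(a\langle h_K,\varphi\rangle-\tfrac{a^2}2\mathrm{cap}(K)\big)$; after integrating out the shape of $K$ the linear term averages out, leaving the tilt $e^{-\frac{a^2}2\mathrm{cap}(K)}$ acting on the capacity law. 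I would verify the resulting normalisation directly: $\int_{1/\gamma}^{\infty} e^{-a^2u/2}\frac{\mathrm{d}u}{2\pi u\sqrt{\gamma u-1}}=\tfrac12\,\mathrm{erfc}\big(|a|/\sqrt{2\gamma}\big)=\P(\varphi_0>|a|)$, which matches $\P(0\in\K^a,\ \K^a\text{ bounded})$ on the subcritical side $a>0$ (the missing mass on the supercritical side being the probability of an unbounded cluster through $0$). Taking $a=-t$ and recalling $\gamma=g_{\{x\}}(0)/h(0)^2$ then yields the stated density.

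The main obstacle is the rigorous implementation of the level change. The shift by $a\,h_K$ does not map $\{\K^0=K\}$ bijectively onto $\{\K^a=K\}$: just outside $\partial K$ the two excursion thresholds differ by $a(1-h_K)>0$, so the cluster shape is genuinely not preserved and one cannot simply push the cluster law forward. The identity must instead be proved at the level of the capacity functional alone, carefully treating the conditioning that the cluster does not extend and the exact level-$a$ behaviour of the field on $\partial K$; the cleanest route is through the loop-soup/isomorphism description of the sign clusters, together with the precise Doob-transform computations of \cite{Pre1} (which is also where the base-case arcsine density is most naturally obtained). Once the tilt is established, the remaining change of variables and the identification of constants are routine.
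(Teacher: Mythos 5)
Your reduction of the Lemma to an intrinsic claim about $\G'=\tilde\G_x^h$, and the identification $\gamma=g^{\G'}(0,0)=g_{\{x\}}(0)/h(0)^{2}$ (which the paper obtains from \cite[(B.1)]{Pre1}), are both correct, and your target density is the right one. But the proposal has a genuine gap at its central step, the passage from level $0$ to level $-t$. The Cameron--Martin shift along $a\,h_K$ fails for exactly the reason you yourself point out: the shifted field's level-$a$ cluster is not $K$, so the tilt $e^{-\frac{a^2}{2}\mathrm{cap}(K)}$ cannot be extracted by pushing the event $\{\K^0=K\}$ forward; and your proposed repair (``prove it at the level of the capacity functional, via loop soups and the Doob-transform computations of \cite{Pre1}'') is a deferral, not an argument. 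What you are attempting to re-derive is in fact a known theorem: \cite[Theorem~3.7]{DrePreRod3} (see also \cite[Theorem~5.5]{Pre1}) gives the law of $\mathrm{cap}(\K^{-t})$, Gaussian factor included, for all $t\le 0$ on any graph satisfying \eqref{eq:critpar0}. The paper's proof simply invokes it, after verifying---with a short symmetry/monotonicity argument that you only assert---that \eqref{eq:critpar0} is inherited by $\tilde\G_x$ and hence by $\tilde\G_x^h$.

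The more substantive omission concerns $t>0$, i.e.\ the cluster at the \emph{negative} (supercritical) level $-t$, which is indispensable because the proof of Proposition~\ref{P:lupu-gen} integrates over all $t\in\R$. There the cluster may be unbounded, and---specific to $\tilde\G_x^h$, which carries a killing measure and hence half-open intervals---it may even be bounded yet non-compact. The available results give the law of the capacity restricted to \emph{compact} clusters, via the symmetry \cite[(3.17)]{DrePreRod3}, whereas the Lemma restricts to \emph{finite capacity}. Bridging these two restrictions is the only genuinely new ingredient in the paper's proof: by the isomorphism (Isom') of \cite{DrePreRod3}, a non-compact cluster $\K^{-t}$ stochastically dominates a random interlacement trajectory at level $t^2/2$, and such trajectories a.s.\ contain a half-open killing interval of $\tilde\G_x^h$, hence have infinite capacity by \cite[(2.32)]{DrePreRod3}; consequently $\{\mathrm{cap}(\K^{-t})<\infty\}$ and $\{\K^{-t}\text{ compact}\}$ coincide a.s., and the finite-capacity law is the stated density. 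Your proposal never addresses this dichotomy---the normalisation check for $a<0$ (``the missing mass is the probability of an unbounded cluster'') silently assumes it---so even granting the tilted density at non-negative levels, the case $t>0$ of the Lemma would remain unproved.
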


Lemma~\ref{L:cap-transform} is proved at the end of this section. Feeding the above density into \eqref{eq:lupu-gen-pf2}, applying Fubini and evaluating the Gaussian integral over $t$ using that
$$
\frac{1}{\sqrt{2\pi g(x)}}\int_{-\infty}^{\infty} e^{-\frac{t^2u}{2} - \frac{t^2}{2g(x)} } \, {\rm d}t = \frac{1}{\sqrt{2\pi g(x)}} \cdot \sqrt{\frac{2\pi}{u+ g(x)^{-1}}} = \frac{1}{\sqrt{ug(x)+1}} , \quad u > 0,
$$
it follows in combination with \eqref{eq:lupu-gen-pf1} that for all $s \in (0,g(x))$, 
\begin{equation}
\label{eq:lupu-gen-pf3}
\P \big(s< g(x)- g_{\mathcal{K}^0}(x)  < g(x)\big) = \displaystyle \int_{\alpha}^{\infty} \frac{h(0)}{2\pi u \sqrt{( u { g_{\{x\}}(0)} - {h(0)^{2}})(ug(x)+1)}} \, {\rm d}u,
\end{equation}
where we abbreviated $\alpha = (\frac1{g(x)-s} - \frac1{g(x)}) \vee \frac{h(0)^{2}}{ g_{\{x\}}(0)} $.
The remainder of the proof is computational. Before proceeding, we first collect the following two useful identities, which follow immediately by the (strong) Markov property for the diffusion on $\tilde{\G}$: for all $x \in \tilde\G$, recalling $h$ from \eqref{eq:doob-h} and that $g_U(x)=g_U(x,x)$ for $U \subset \tilde\G$ with $g=g_{\emptyset}$, one readily obtains that
\begin{align}
\label{eq:id1} &g(0,x)= h(0)g(x) \quad \text{and}\\
\label{eq:id2} &g_{\{x\}}(0)= g(0)-h(0)g(x,0)\stackrel{\eqref{eq:id1}}{=} g(0)-h(0)^2g(x).
\end{align}
 We now consider the regime of $s (< g(x))$ such that $\frac1{g(x)-s} - \frac1{g(x)} \geq  \frac{h(0)^{2}}{ g_{\{x\}}(0)}$ (so $\alpha =  \frac1{g(x)-s} - \frac1{g(x)}$ in \eqref{eq:lupu-gen-pf3}), or equivalently that
\begin{equation}
\label{eq:lupu-gen-pf4}
 s \geq \frac{h(0)^2g(x)^2}{g_{\{x\}}(0) + h(0)^2g(x)} \stackrel{\eqref{eq:id1}, \eqref{eq:id2}}{=} \frac{g(0,x)^2}{g(0)}= P_x(H_0<\infty) g(0,x)= g(x)- g_{\{0\}}(x),
\end{equation}
where the penultimate equality is obtained by last-exit decomposition and the last one by the Markov property. Notice that, except for the terminal value $s=g(x)$, which we will deal with separately at the end, the regime of parameters $s (< g(x))$ satisfying \eqref{eq:lupu-gen-pf4} coincides precisely with the one above \eqref{eq:lupu-gen-alt}. For such values of $s$, and with the help of the substitution
$$
u= \frac{g(0,x)^2}{g(x)^2g(0)v^2 - g(0,x)^2g(x)}
$$
one recasts the integral in \eqref{eq:lupu-gen-pf3} with the help of \eqref{eq:id1} and \eqref{eq:id2} to obtain that
\begin{align}
\begin{split}
\label{eq:lupu-gen-pf5}
\P \big(s< g(x)- g_{\mathcal{K}^0}(x)  < g(x)\big) &= \frac1{\pi} \displaystyle \int_{\frac{g(0,x)}{\sqrt{g(0)g(x)}}}^{\frac{g(0,x)}{\sqrt{g(0)s}}} \frac{1}{\sqrt{1-v^2}} \, {\rm d}v \\
&= \frac1{\pi}\bigg( \textstyle \arcsin \Big(\frac{g(0,x)}{\sqrt{g(0)s}} \Big)- \arcsin \Big(\frac{g(0,x)}{\sqrt{g(0)g(x)}} \Big)\bigg)
\end{split}
\end{align}
for all $s \in [\frac{g(0,x)^2}{g(0)}, g(x))$. To deduce \eqref{eq:lupu-gen-alt}, consider now the case $s= \frac{g(0,x)^2}{g(0)}$, for which \eqref{eq:lupu-gen-pf5} yields that
$$
\P \big(\textstyle \frac{g(0,x)^2}{g(0)} < g(x)- g_{\mathcal{K}^0}(x)  < g(x)\big) = 1- \frac12 - \frac1\pi\arcsin \Big(\frac{g(0,x)}{\sqrt{g(0)g(x)}} \Big).
$$
Finally, observe that the random variable $g(x)- g_{\mathcal{K}^0}(x)$ is non-negative and its distribution has an atom at $0$ of weight $\frac12$ (since $g(x)- g_{\mathcal{K}^0}(x)=0$ if and only if $\varphi_0 <0$, which has probability $\frac12$). In combination with the fact that 
$\P\big(0 < g(x)- g_{\mathcal{K}^0}(x) \textstyle \le \frac{g(0,x)^2}{g(0)}\big)=0,
$
it thus follows from the previous display upon rearranging terms that
\begin{equation}
\label{eq:lupu-gen-pf6}
\P \big(g(x)- g_{\mathcal{K}^0}(x)  \geq g(x)\big) \, \Big( = \P \big(g(x)- g_{\mathcal{K}^0}(x)  = g(x)\big) \Big) \, = \frac1\pi\arcsin \Big(\frac{g(0,x)}{\sqrt{g(0)g(x)}} \Big).
\end{equation}
Plugging \eqref{eq:lupu-gen-pf6} into \eqref{eq:lupu-gen-pf5} yields \eqref{eq:lupu-gen-alt}. This completes the proof of Proposition~\ref{P:lupu-gen} under the assumption that both Lemmas~\ref{L:link-cap-g-inv} and~\ref{L:cap-transform} hold.
\end{proof}

We now supply the:

\begin{proof}[Proof of Lemma~\ref{L:link-cap-g-inv}]
We will assume that $\partial K \subset G \setminus\{x\}$, the vertex set of $\G_x$. 
In this case $\partial (\iota(K))$ and $\partial K$ can be naturally identified.
The general case can be reduced to this one by exploiting network equivalence, adding vertices
to $\G_x$ (and $\G_x^h$) corresponding to the boundary points of $K$ (and $\iota(K)$); see \cite[Section~2.2]{DrePreRod3} for further details. Recall the definition of the graph $\G_x^h$ from below \eqref{eq:doob-h}, and notice that by \cite[(5.3)]{Pre1} its weight function $\lambda_{\iota(y)}^h := \kappa_y^h+ \sum_{z \sim y} \lambda_{y,z}^h$ can be recast as $\lambda_{\iota(y)}^h= h(y)^2 \lambda_y$, where $\lambda_y$ is the original weight function on $\G_x$, which coincides with that of $\G$ outside the point $x$. In light of this, and using the concrete characterization of the law $P_{\cdot}^{\tilde\G_x^h}$ described above Lemma~\ref{L:link-cap-g-inv}, one obtains that
\begin{align}
\begin{split}
\label{eq:doob-cap1}
\textnormal{cap}_{\tilde\G_x^h} (\iota(K))&= \sum_{y \in \partial K} \lambda_{\iota(y)}^h P_{\iota(y)}^{\tilde\G_x^h}(\widetilde{H}_K=\infty) = \sum_{y \in \partial K} h(y)^2 \lambda_y P_y^{\tilde\G}(\widetilde{H}_K> H_x | H_x < \infty)\\
&\hspace{-.5em}\stackrel{\eqref{eq:doob-h}}{=} \sum_{y \in \partial K} h(y) \lambda_y P_y^{\tilde\G}(\widetilde{H}_K> H_x) = \sum_{y \in \partial K} h(y) \lambda_x P_x^{\tilde\G}\big(\widetilde{H}_x> H_K, X_{H_K}=y\big),
\end{split}
\end{align}
where the last step uses the reversibility of the walk under $\lambda$. Now, with $(\widehat{X}_n)_{n \geq 0}$ denoting the discrete skeleton of $X$ (under $P_x^{\tilde\G}$) on $G$, which has the law of the random walk on $\G= (G,\lambda)$, decomposing when $\widehat{X}_0=x$ and on the event $\{ H_K<\infty\}$ according to the time of the last visit to $x$ prior to entering $K$, one finds that
\begin{align}\begin{split}
\label{eq:doob-cap2}
&P_x^{\tilde\G}\big(H_K< \infty, X_{H_K}=y\big)\\
&=\sum_{n \geq 0} P_x^{\tilde\G}\Big(\widehat X_k \notin K, k<n, \widehat X_n=x , \widehat X_{n+i} \neq x, 1 \leq i < H_K-n, 
 H_K< \infty, \widehat X_{H_K}=y\Big)\\
 &= g_K(x) \lambda_x P_x^{\tilde\G} \big(H_K < \widetilde{H}_x, X_{H_K}=y \big),
 \end{split}
\end{align}
where the last line follows by an application of the simple Markov property for $\widehat{X}$ at time $n$ (recall that $g_K(x)=g_K(x,x)$). Feeding \eqref{eq:doob-cap2} into \eqref{eq:doob-cap1}, and using that $h(y)= \frac{g(x,y)}{g(x)}$, it follows from a reasoning similar to \eqref{eq:alt-green-diff} that
$$
\textnormal{cap}_{\tilde\G_x^h} (\iota(K))= \sum_{y \in \partial K} \frac{g(x,y)}{g(x) g_K(x)} P_x^{\tilde\G}\big(H_K< \infty, X_{H_K}=y\big)=  \frac{g(x)- g_K(x)}{g(x) g_K(x)}, 
$$
from which \eqref{eq:link-cap-g-inv} is immediate.
\end{proof}

 We now provide the remaining:

\begin{proof}[Proof of Lemma~\ref{L:cap-transform}]
 On the event $\phi_x\geq 0$, the cluster of $0$ in  $\{y\in{\tilde{\G}}:\psi_y\geq 0\}$ is included in $\K^0$ under $\P^{\tilde{\G}}$, and since $\psi$ is independent of $\phi_x$, one deduces from \eqref{eq:critpar0} on $\tilde{\G}$ that  \eqref{eq:critpar0} is also satisfied on $\tilde{\G}_x$ by writing 
 \begin{multline*}
 2^{-1}\P^{\tilde{\G}_x}(\K^0(\varphi) \text{ is bounded})= \P^{\tilde{\G}} (\K^0(\psi) \text{ is bounded}, \, \phi_x\geq 0) \\\geq \P^{\tilde{\G}} (\K^0(\varphi) \text{ is bounded}, \, \phi_x\geq 0) = 2^{-1}\theta_0(0) \stackrel{\eqref{eq:critpar0}}{=}2^{-1},
 \end{multline*}
 where the pemultimate step follows by symmetry, see \cite[Lemma 4.3]{DrePreRod3}. One readily deduces that the free field under $\P^{\tilde\G_x^h}$ has bounded sign clusters a.s, hence $\textnormal{cap}_{\tilde\G_x^h} (\mathcal{K}^{-t})$ has an explicit law given by \cite[Theorem 3.7]{DrePreRod3} (see also \cite[Theorem 5.5]{Pre1}) for all $t\leq 0$. For $t>0$, it follows from \cite[(3.17)]{DrePreRod3} that $\textnormal{cap}_{\tilde\G_x^h} (\mathcal{K}^{-t})$ on the event $\emptyset\neq\mathcal{K}^{-t}$ is compact has the same law as $\textnormal{cap}_{\tilde\G_x^h} (\mathcal{K}^{t})$ on the event $\mathcal{K}^{t}\neq\emptyset$. As we now explain, for $t>0$ and under $\P^{\tilde{\G}_x^h}$, $\mathcal{K}^{-t}$ is actually compact if and only if $\mathrm{cap}(\mathcal{K}^{-t})<\infty$. Indeed any compact set has finite capacity by construction. On the other hand by the isomorphism (Isom') from \cite[p.283]{DrePreRod3}, which is in force in view of Theorem~1.1,2) and (3.14) therein, $\mathcal{K}^{-t}$ has either the same law as $\mathcal{K}^0$ if $\mathcal{K}^0$ does not intersect a trajectory in $\omega_u$, and otherwise stochastically dominates a trajectory in $\omega_u$, where $\omega_u$ is the interlacement process at level $u=t^2/2$ on $\tilde{\G}_x^h$. By the description of the law of $P_{\cdot}^{\tilde{\G}_x^h}$ appearing above the statement of Lemma~\ref{L:link-cap-g-inv}, random interlacement trajectories on $\tilde{\G}_x^h$ a.s.~have images via $\iota$ which contain the interval between $x$ and $y$ in $\tilde{\G}$ for some $y\sim x$, that is the half-open interval starting at $y$ in $\tilde{\G}_x^h$,  and thus have infinite capacity on $\tilde{\G}_x^h$ in view of \cite[(2.32)]{DrePreRod3}. Since $\mathcal{K}^0$ is a.s.~compact on $\tilde{\G}_x^h$, we deduce that if $\mathcal{K}^{-t}$ is non-compact, then it stochastically dominates a trajectory in $\omega_u$, and thus has a.s.~infinite capacity. 

Overall we have that for each $t\in{\R}$, the density with respect to Lebesgue measure of  $\textnormal{cap}_{\tilde\G_x^h} (\mathcal{K}^{-t})$ on the event $0<\textnormal{cap}_{\tilde\G_x^h} (\mathcal{K}^{-t})<\infty$ is given by \cite[(3.7)]{DrePreRod3} on the graph $\tilde{\G}_x^h$. This density depends on $\tilde\G_x^h$ only through $g_{\tilde\G_x^h}(0) =g_{\tilde\G_x^h}(0,0)$, where $g_{\tilde\G_x^h}$ denotes the Green's function of the diffusion on $\tilde\G_x^h$. But as $g_{\tilde\G_x^h}(0)=h(0)^{-2} g_{\{x\}}(0)$ by \cite[(B.1)]{Pre1}, applying \cite[(3.8) and (3.10)]{DrePreRod3} gives the claim.
\end{proof}

\section{Proof of Theorem~\ref{eq:explic1armbound}}\label{sec:one-arm}
Throughout this section, we always tacitly work under the assumption that $\G=(G,\lambda)$ satisfies \eqref{eq:ellipticity}, \eqref{eq:intro_sizeball} and \eqref{eq:intro_Green} with $0< \nu < \frac{\alpha}{2}$; see Section~\ref{sec:intro}. Note that one could also consider the case $\nu=\frac{\alpha}{2}$, but the strategy below would in its present form not lead to any significant improvement over the results of \cite{DrePreRod8}. The symbols $c,C,\dots$ are used for numerical constants (in $(0,\infty)$) that can change from place to place. Their dependence on any parameter other than $\nu$ and $\alpha$, as well as the other constants appearing in our conditions \eqref{eq:ellipticity}, \eqref{eq:intro_sizeball} and \eqref{eq:intro_Green}, will appear explicitly in our notation. In the sequel, we abbreviate $B(L)=B(0,L) \subset \tilde{\G}$ and for all $L\geq1$ introduce the set $\overline{B}(L) \supset B(L)$ as
\begin{equation}
\label{eq:defoverlineball}
\begin{split} 
\overline{B}(L)\stackrel{\text{def.}}{=}
\left\{\begin{array}{c}
x\in{\tilde{\G}}:\text{ any continuous path }\pi=(x_t)_{t\geq0}\subset \tilde{\G}
\\\text{with }x_0=x\text{ and }d(0,x_i)\tend{i}{\infty}\infty\text{ intersects }B(L)
\end{array}\right\}.
\end{split}
\end{equation}
In words, $\overline{B}(L)$ is the set of points which are entirely `surrounded' by $B(L)$. For $R \geq 1$, $\varepsilon > 0$  and $0<2a<b\leq 1$, we let 
\begin{multline}
\label{eq:1-arm-UB1}
t_{R,\varepsilon}^{a,b}\stackrel{\text{def.}}{=}\sup \Big\{ t \geq0:\,\text{ for all }s\leq t,
\\
\P\Big(\mathcal{K}^0 \cap \partial \overline{B}(R) \neq \emptyset, \, \mathrm{cap}(\mathcal{K}^0\cap \mathbb{A}_{R}^{a,b})\leq s((b-a)R)^{\nu}\Big) \leq R^{-\frac{\nu}{2}}e^{-\varepsilon s^{-\frac1\nu}} \Big\},
\end{multline}
where $\mathbb{A}_R^{a,b}$ denotes the `annulus'
\begin{equation}
\label{eq:defann}
\mathbb{A}_R^{a,b}\stackrel{\text{def.}}{=}{B}((1-a)R)\setminus \overline{B}((1-b)R).
\end{equation}
The main ingredient of our proof is the following recursive formula for $ t_{R,\varepsilon}^{a,b}$.

\begin{prop}\label{P:t_R-LB}
There exist $\Cl[c]{c:eps2}>0$ and $c>0 $ such that with $t_R^{\cdot}= t_{R, \Cr{c:eps2}}^{\cdot}$, one has
\begin{equation}
\label{eq:t_R-LB1}
 t_{R}^{a,b} \geq c\Big(a^{\alpha\nu}\wedge\log\Big(\frac1{{t_{R}^{d,e}\vee d}}\Big)^{-\nu}\Big) , \text{ for all $R \geq 1$, $cR^{-1}\leq2a<b<1$ and $2d< e \leq a/4$.}
\end{equation}
\end{prop}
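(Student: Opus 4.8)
The plan is to establish the upper bound defining $t_R^{a,b}$, namely that $\P(\mathcal{K}^0\cap\partial\overline B(R)\neq\emptyset,\ \mathrm{cap}(\mathcal{K}^0\cap\mathbb{A}_R^{a,b})\le s((b-a)R)^\nu)\le R^{-\nu/2}e^{-\Cr{c:eps2}s^{-1/\nu}}$ for every $s$ up to $c(a^{\alpha\nu}\wedge\log(1/(t_R^{d,e}\vee d))^{-\nu})$, by peeling the cluster at the sphere bounding the bulk annulus and feeding the inductive information carried by $t_R^{d,e}$ into the region closer to $\partial B(R)$. Recall the geometry: with $cR^{-1}\le 2a<b<1$ and $2d<e\le a/4$, the annulus $\mathbb{A}_R^{a,b}$ occupies the radii $[(1-b)R,(1-a)R]$, the thinner annulus $\mathbb{A}_R^{d,e}$ the radii $[(1-e)R,(1-d)R]$, and because $e\le a/4$ these are separated by a gap of width at least $\tfrac34 aR$, all inside $B(R)$. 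The event $\{\mathcal{K}^0\cap\partial\overline B(R)\neq\emptyset\}$ forces $\mathcal{K}^0$ to traverse the bulk annulus, the gap, and the thin shell in turn.

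First I would condition on the trace of the cluster on the outer sphere of the bulk annulus. Conditioning on $\mathcal{K}^0\cap\overline B((1-a)R)$, on the target event the set $T:=\mathcal{K}^0\cap\partial\overline B((1-a)R)$ satisfies $\mathrm{cap}(T)\le\mathrm{cap}(\mathcal{K}^0\cap\mathbb{A}_R^{a,b})\le s((b-a)R)^\nu$ by monotonicity of capacity, since $T$ lies on the outer boundary of $\mathbb{A}_R^{a,b}$. By the domain Markov property of $\varphi$ across $\partial\overline B((1-a)R)$, the part of $\mathcal{K}^0$ outside $\overline B((1-a)R)$ is the excursion-set cluster emanating from $T$ in the complementary region, and reaching $\partial\overline B(R)$ is equivalent to this outer cluster crossing the gap and then the thin shell $\mathbb{A}_R^{d,e}$.

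The core estimate then controls this outer cluster. Here I would invoke Proposition~\ref{P:lupu-gen} together with the explicit law of the cluster capacity underlying \eqref{eq:cap-tail}, applied on the subgraph obtained by killing $X$ on $\overline B((1-a)R)$ and using the Doob-transform description of the conditioned field developed in the proof of Proposition~\ref{P:lupu-gen}. This re-expresses the outer cluster started from a trace of capacity at most $s((b-a)R)^\nu$ as a genuine metric-graph excursion cluster on a transformed graph, so that the inductive bound encoded in $t_R^{d,e}$ can be brought to bear on its crossing of $\mathbb{A}_R^{d,e}$. The regularity assumptions \eqref{eq:intro_sizeball} and \eqref{eq:intro_Green} enter to translate the smallness of $\mathrm{cap}(T)$ into a quantitative cost for crossing the gap of width $\asymp aR$ and for depositing capacity in the thin shell. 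Balancing the power of $s$ produced by the explicit formula against the rate $\Cr{c:eps2}(s')^{-1/\nu}$ available from $t_R^{d,e}$, and optimizing over an auxiliary crossover level $s'\le t_R^{d,e}$, is what pins the admissible range of $s$ to $\lesssim\log(1/(t_R^{d,e}\vee d))^{-\nu}$; a crude a priori estimate supplies the complementary floor $a^{\alpha\nu}$, which is exactly the threshold below which $e^{-\Cr{c:eps2}s^{-1/\nu}}$ dominates, since $s\le a^{\alpha\nu}$ forces $s^{-1/\nu}\ge a^{-\alpha}$.

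The step I expect to be the main obstacle is this core estimate: turning the exact but single-point identity \eqref{eq:lupu-gen} into a genuinely two-scale statement that controls the \emph{joint} behaviour of the capacities in $\mathbb{A}_R^{a,b}$ and $\mathbb{A}_R^{d,e}$, while correctly re-rooting the outer cluster at $T$ so that the bound defining $t_R^{d,e}$ legitimately applies. The delicate bookkeeping is to reconcile the rate $\Cr{c:eps2}s^{-1/\nu}$ demanded on the left with the rate $\Cr{c:eps2}(s')^{-1/\nu}$ supplied by the induction, so that after optimization exactly the logarithmic threshold $\log(1/(t_R^{d,e}\vee d))^{-\nu}$ and the prefactor $R^{-\nu/2}$ survive. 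Handling the boundary and discretization effects in the inequality $\mathrm{cap}(T)\le\mathrm{cap}(\mathcal{K}^0\cap\mathbb{A}_R^{a,b})$ and the transfer between the killed subgraph and $\tilde\G$ is the remaining technical burden.
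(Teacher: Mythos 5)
Your decomposition has a genuine gap, and it sits exactly where you flag "the main obstacle": there is no mechanism in your argument that can produce the exponential factor $e^{-\varepsilon s^{-1/\nu}}$ required by the definition \eqref{eq:1-arm-UB1} of $t_{R,\varepsilon}^{a,b}$. After conditioning on the field in $\overline{B}((1-a)R)$, the only information you retain about the event $\{\mathrm{cap}(\mathcal{K}^0\cap\mathbb{A}_R^{a,b})\le s((b-a)R)^{\nu}\}$ is that the trace $T=\mathcal{K}^0\cap\partial\overline{B}((1-a)R)$ has small capacity. But small capacity of the starting set only yields \emph{polynomial} control on the outer crossing: by the (two-sided!) tail asymptotics \eqref{eq:cap-tail}, the cluster emanating from a set of capacity $\kappa$ reaches capacity $u$ (hence distance $\asymp aR$, i.e.\ $u\asymp (aR)^{\nu}$) with probability of order $\sqrt{\kappa/u}\lesssim \sqrt{s}\,((b-a)/a)^{\nu/2}$, and this order is attained, so no amount of Doob-transform rewriting of the outer cluster can upgrade $\sqrt{s}$ to $e^{-cs^{-1/\nu}}$. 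The exponential would therefore have to come from the \emph{inner} event — the cluster crossing $\mathbb{A}_R^{a,b}$ while having small capacity there — which is precisely the type of quantity the proposition is trying to bound: the argument is circular. A second, related defect is the application of the "inductive information carried by $t_R^{d,e}$" to the outer cluster rooted at $T$: by \eqref{eq:1-arm-UB1}, $t_R^{d,e}$ is a statement about the cluster of the origin under the centered law $\P$ satisfying \eqref{eq:critpar0}, not about a cluster of a boundary set under a conditioned field whose mean is the (possibly large) harmonic extension of the boundary data; Proposition~\ref{P:lupu-gen} likewise concerns $\mathcal{K}^0$ for a centered field, so neither tool applies after your re-rooting, and nothing in the proposal repairs this.

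The paper's proof avoids spatial conditioning altogether. The two scales are coupled by a plain union bound on the \emph{same} cluster under the \emph{same} measure: split according to whether $\mathrm{cap}(\mathcal{K}^0\cap\mathbb{A}_R^{d,e})$ is below or above $t((e-d)R)^{\nu}$ with $t=\exp(-cs^{-1/\nu})$. If below, the defining property of $t_{R,\varepsilon}^{d,e}$ applies verbatim; if above, the key estimate \eqref{eq:consequencenewcapformula} applies. That estimate — where the exponential is actually generated — is proved through Lupu's loop-soup isomorphism \eqref{eq:loopiso}: on the event that $\mathrm{cap}(\mathcal{K}^0\cap\mathbb{A}_R^{a,b})$ is small, the cluster must avoid every macroscopic loop contained in the annulus, so by the restriction property its conditional law given the big loops $\mathcal{O}$ is that of a GFF cluster on the graph killed on $\mathcal{O}$; since big loops occupy a positive fraction of the $\asymp s^{-1/\nu}$ boxes along any crossing, either $\mathcal{O}$ is a good obstacle set, in which case $P_0(H_x<H_{\mathcal{O}})\le e^{-cs^{-1/\nu}}R^{-\nu}$ and this is fed into the generalized Lupu formula \eqref{eq:newformulacap} on $\tilde{\G}_{\mathcal{O}}$ (this is where the large capacity in $\mathbb{A}_R^{d,e}$ is needed, to make the cluster "visible" from a distant point $x$), or the bad event occurs with probability $e^{-c\ell}$. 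This obstacle mechanism — small capacity in the annulus forces avoidance of a dense field of killing obstacles — is the idea your proposal is missing, and without it, or some substitute for it, the required bound cannot be reached.
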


Proposition~\ref{P:t_R-LB} readily implies our main result, as we explain first.
\begin{proof}[Proof of Theorem~\ref{eq:explic1armbound}]
Defining recursively $\log_0(R)=R$ and $\log_{k+1}(R)=\log(\log_k(R))\vee1$, we let
\begin{equation*}
    a_k=\frac{4}{\log_k(R)}\text{ and } b_k=\frac{1}{\log_{k+1}(R)}\text{ for all }k\geq0.
\end{equation*}
We take $a=a_{k+1}$, $b=b_{k+1}$, $d=a_k$ and $e=b_k$ in Proposition~\ref{P:t_R-LB}, and one can verify that the assumptions therein are satisfied for $R\geq C$, which can be assumed w.l.o.g.\ since $\mathrm{cap}(\K^0)\geq c$ whenever $\K^0\neq\emptyset$. Then, abbreviating $u_k=1/t_{R}^{b_k,a_k}$ and recalling that $\alpha\geq1$ (in fact $\alpha > 2$; see below \eqref{eq:intro_Green}), we have by \eqref{eq:t_R-LB1} that 
\begin{equation}\label{eq:indu}
    u_{k+1}\leq \Cl{C:bounduk}\big(\log(u_k)\vee \log_{k+1}(R)\big)^{\alpha\nu}\text{ for all }k\geq0 \text{ such that  }\log_{k+2}(R)\geq 2,
\end{equation}
for some constant $\Cr{C:bounduk}<\infty$. Note that on the event $\K^0\cap \partial \overline{B}(R)\neq\emptyset$, it follows from \cite[(2.8)]{DrePreRod2} that  $\K^0\cap\overline{B}((1-b_0)R)^{\mathsf{c}}\neq\emptyset$  for $R\geq C$, and if $x$ is the closest vertex to $0$ in that set, then $x\in{B((1-a_0)R)}$ for $R$ large enough. Hence $\mathbb{A}_R^{a_0,b_0}\cap\K^0\cap G\neq\emptyset$, and so $\mathrm{cap}(\K^0\cap \mathbb{A}_R^{a_0,b_0})\geq \inf_{x\in{G}}g(x)^{-1}\geq c$ by \eqref{eq:intro_Green}. Therefore, $t_{R}^{a_0,b_0}\geq R^{-\nu}$ for $R\geq C$, which implies  $u_0\leq R^{\nu}$, and one easily deduces from this and \eqref{eq:indu} inductively in $k$ that
\begin{equation*}
\begin{split} 
u_k\leq 2\Cr{C:bounduk}\alpha\nu\log_{k}(R)^{\alpha\nu}\text{ for all $k\geq0$\, such that }\log_k(R)\geq\Cl{C:bounduk2},
\end{split}
\end{equation*}
for some large enough constant $\Cr{C:bounduk2}\in (0,\infty)$. Hence if we denote by $\bar{k}=\bar{k}(R)$ the largest $k$ such that $\log_k(R)\geq \Cr{C:bounduk2}$, we deduce that $u_{\bar{k}}\leq 2\Cr{C:bounduk}\alpha\nu\exp(\Cr{C:bounduk2})$, that is $t_R^{b_{\bar{k}},a_{\bar{k}}}\geq c$. Since $b_{\bar{k}}-a_{\bar{k}}\geq c$ by definition, we obtain \eqref{eq:bound1arm+cap} after a change of variable for $s$, up to replacing $\partial B(R)$ appearing therein by $\partial \overline{B}(R)$. To conclude, it is enough to prove that $\overline{B}(R)\subset B(CR)$ for a large enough constant $C$ and to perform a change of variable for $R$ in \eqref{eq:bound1arm+cap}.  The inclusion can be proved using \eqref{eq:hittingcap} to deduce that for all $x\in{\overline{B}(R)}$, one has
\begin{equation*}
\begin{split} 
1=P_x(H_{B(R)}<\infty)\leq\mathrm{cap}(B(R))\sup_{y\in{B(R)}}g(y,x)\leq CR^{\nu}\Big(\inf_{y\in{B(R)}}d(x,y)\Big)^{-\nu},
\end{split}
\end{equation*}
where the last inequality follows from \eqref{eq:intro_Green} and \cite[(3.11)]{DrePreRod2}.
\end{proof}

We now turn to the proof of Proposition~\ref{P:t_R-LB}, which relies on the formula \eqref{eq:lupu-gen} applied to suitable metric graphs $\tilde{\G}_K \subset \tilde{\G}$ that we now introduce. We consider $K\subset G$ a finite set of vertices and define $\tilde{\G}_K$ as the (only) unbounded connected component of $\tilde{\G}\setminus K$.
Note that $\tilde{\G}_K$ can be identified with the metric graph associated to the graph $\G_K$ having vertex set the unique infinite connected component of $G\setminus K$, the same weights as $\G$ between vertices, and killing measure equal to $\lambda_{y,x}$ for all $y$ in that infinite component which have a neighbor $x$ in $K$, and zero everywhere else. We refer to the beginning of the proof of Proposition~\ref{T1_signsat} for a similar construction. In particular, this identification of $\tilde{\G}_K$ entails that the diffusion on $\tilde{\G}_K$ is well-defined and it is simply the diffusion on $\tilde{\G}$ killed when hitting $K$. Moreover, the graph $\G_K$ fits the setup of Section~\ref{sec:Lupu}, and we can thus define a Gaussian free field on $\tilde{\G}_K$ with canonical law $\P^{\tilde{\G}_K}$. Observe that, in the notation of Section~\ref{sec:Lupu} (see above \eqref{eq:defcap}), this free field has covariance $g^{\tilde{\G}_K}(x,y)= g^{\tilde{\G}}_K(x,y)$, for all $x,y \in \tilde{\G}_K$.

Applying the formula \eqref{eq:lupu-gen}  on the metric graph $\tilde{\G}_K$ yields the following result.
\begin{Lemme}
\label{lem:newformulacap}
There exists $\Cl{c:M}\in{[1,\infty)}$ such that for all values of $R\geq1$, $cR^{-1}\leq a<1/4$,  $K\subset \mathbb{A}_R^{a/2}\stackrel{\text{def.}}{=}{B}((1-a/2)R)$ , $2d<e\leq a/4$, and $t>0$, 
\begin{equation}
\label{eq:newformulacap}
\begin{split} 
\P^{\tilde{\G}_K}\big(\mathrm{cap}(\mathcal{K}^0\cap \mathbb{A}_{R}^{d,e})\geq t((e-d)R)^{\nu}\big)\leq CR^{\frac\nu2}t^{-\frac12}d^{-\frac\nu2}\exp(Ca^{-\alpha})\inf_{x\in{\partial B(\Cr{c:M}R)}}P_0(H_x<H_K).
\end{split}
\end{equation}

\end{Lemme}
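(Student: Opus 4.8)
The lemma bounds the probability that the cluster $\mathcal{K}^0$ has large capacity within an inner annulus $\mathbb{A}_R^{d,e}$, computed on the killed metric graph $\tilde{\G}_K$. The right-hand side features the factor $\inf_{x\in\partial B(\Cr{c:M}R)}P_0(H_x<H_K)$, which is a survival-type probability for the walk to reach distance $\sim R$ before hitting $K$. So the strategy must connect a *capacity* tail bound to a *hitting* probability, and the natural bridge is Proposition~\ref{P:lupu-gen}, applied on $\tilde{\G}_K$.

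**Key observation to exploit.** The Lupu-type formula \eqref{eq:lupu-gen} on $\tilde{\G}_K$ relates $\P^{\tilde{\G}_K}(g_{\{0\}}^{\tilde{\G}_K}(x)-g_{\mathcal{K}^0}^{\tilde{\G}_K}(x)\geq t)$ to $\arctan(g^{\tilde{\G}_K}(0,x)/\sqrt{tg^{\tilde{\G}_K}(0)})$. Since $\arctan(u)\leq u$, this gives an upper bound proportional to $g^{\tilde{\G}_K}(0,x)$, i.e. to the two-point Green's function on the killed graph. The central identity I would use is that the capacity of $\mathcal{K}^0$ within an annulus controls, via the equilibrium measure, a weighted sum of quantities $g_{\{0\}}^{\tilde{\G}_K}(x)-g_{\mathcal{K}^0}^{\tilde{\G}_K}(x)$ over $x$ in that annulus — exploiting the alternative description \eqref{eq:alt-green-diff}, $g_{\{0\}}(x)-g_{\K^0}(x)=E_x[g_{\{0\}}(X_{H_{\K^0}},x)1\{H_{\K^0}<H_{\{0\}}\}]$, which is positive precisely when $x$ can reach $\mathcal{K}^0$.

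**Plan of the proof.** The plan is to proceed as follows. First I would set up the free field on $\tilde{\G}_K$ as described in the paragraph preceding the lemma, noting $g^{\tilde{\G}_K}=g_K^{\tilde{\G}}$, and verify \eqref{eq:critpar0} holds on $\tilde{\G}_K$ (as in the proof of Lemma~\ref{L:cap-transform}) so that Proposition~\ref{P:lupu-gen} is available there. Second, I would express $\mathrm{cap}(\mathcal{K}^0\cap\mathbb{A}_R^{d,e})$ through a first-moment / Markov-inequality argument: bounding the probability that the capacity exceeds $t((e-d)R)^\nu$ by $t^{-1}((e-d)R)^{-\nu}$ times an expected capacity, and then rewriting the expected capacity as a sum over equilibrium contributions. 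The point is to reduce to estimating, for $x$ in the annulus, the probability $\P^{\tilde{\G}_K}(x\in\mathcal{K}^0)$ or the expectation of $g_{\{0\}}^{\tilde{\G}_K}(x)-g_{\mathcal{K}^0}^{\tilde{\G}_K}(x)$, each of which \eqref{eq:lupu-gen} bounds by $\lesssim g_K^{\tilde{\G}}(0,x)$. Third, I would decompose the killed Green's function $g_K^{\tilde{\G}}(0,x)$ via a last-exit or strong-Markov argument through $\partial B(\Cr{c:M}R)$: a walk from $0$ reaching the inner annulus $\mathbb{A}_R^{d,e}$ must, in order to contribute, travel out to radius $\sim R$ without hitting $K$, which produces the factor $\inf_{x\in\partial B(\Cr{c:M}R)}P_0(H_x<H_K)$, while the remaining geometric pieces are controlled by \eqref{eq:intro_Green} and \eqref{eq:intro_sizeball}, generating the powers $R^{\nu/2}$, $d^{-\nu/2}$ and the volume factor $\exp(Ca^{-\alpha})$.

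**Main obstacle.** The delicate step is the third one: isolating the correct hitting probability $P_0(H_x<H_K)$ with the right constant $\Cr{c:M}$ and the right exponents, while summing over all $x$ in $\mathbb{A}_R^{d,e}$. One must carefully separate the contribution of the walk from $0$ out to radius $\sim R$ (which carries the survival-past-$K$ cost and is uniform via the $\inf$) from the local contribution near $x$ inside the annulus (which, by \eqref{eq:intro_Green}, behaves like $d^{-\nu}$ up to constants and accounts for the $d^{-\nu/2}$ after taking square roots in \eqref{eq:lupu-gen}). Getting the power of $t$ to be $t^{-1/2}$ rather than $t^{-1}$ requires using the $\arcsin$/$\arctan$ exact formula \eqref{eq:lupu-gen-alt} rather than a crude union bound — the square-root in the denominator $\sqrt{tg^{\tilde{\G}_K}(0)}$ is exactly what yields the $t^{-1/2}$ scaling, so the summation over the annulus must be organized to preserve this square-root gain, likely by controlling a second moment or by a Cauchy–Schwarz step combined with the capacity tail \eqref{eq:cap-tail}. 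Reconciling the volume growth $\exp(Ca^{-\alpha})$ with these estimates — it comes from the number of annular shells or from a chaining over $\partial B$ — is the bookkeeping core of the argument.
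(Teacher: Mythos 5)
There is a genuine gap at the heart of your plan. Your second step is a first-moment/Markov argument: you bound the probability that $\mathrm{cap}(\mathcal{K}^0\cap\mathbb{A}_R^{d,e})$ exceeds $t((e-d)R)^{\nu}$ by $t^{-1}((e-d)R)^{-\nu}$ times an expected capacity, and then control the summands $\P^{\tilde{\G}_K}(x\in\mathcal{K}^0)$ or $\E^{\tilde{\G}_K}[g_{\{0\}}^{\tilde{\G}_K}(x)-g_{\mathcal{K}^0}^{\tilde{\G}_K}(x)]$ pointwise by $g_K^{\tilde{\G}}(0,x)$. This structurally produces $t^{-1}$, and -- as you yourself concede in your ``main obstacle'' paragraph -- you have no concrete mechanism to recover $t^{-1/2}$: once you have passed to an expectation, the $\sqrt{t}$ in the denominator of \eqref{eq:lupu-gen} is gone (the pointwise bounds $\P^{\tilde{\G}_K}(x\in\mathcal{K}^0)\lesssim g_K^{\tilde{\G}}(0,x)$ do not involve $t$ at all), and neither a second moment nor Cauchy--Schwarz combined with \eqref{eq:cap-tail} obviously restores it. Since the $t^{-1/2}$ scaling is exactly what the recursion in Proposition~\ref{P:t_R-LB} needs, this is not a bookkeeping issue but the missing core of the proof.

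The paper's proof avoids any summation over the annulus. The key move is \emph{deterministic}: on the event $\mathrm{cap}(\mathcal{K}^0\cap\mathbb{A}_R^{d,e})\geq t((e-d)R)^{\nu}$, one shows that for a \emph{single, fixed, distant} point $x\in\partial B(\Cr{c:M}R)$ the Green's function difference is forced to be large, namely $g_{\{0\}}^{\tilde{\G}_K}(x)-g_{\mathcal{K}^0}^{\tilde{\G}_K}(x)\geq c\,t\,d^{\nu}R^{-\nu}\exp(-Ca^{-\alpha})$. This follows from \eqref{eq:alt-green-diff} on $\tilde{\G}_K$, the hitting-probability identity \eqref{eq:hittingcap} applied with killing on $B((1-a/2)R)\supset K$ (which converts the capacity lower bound into a lower bound on $P_x(H_{\mathcal{K}^0\cap\mathbb{A}_R^{d,e}}<H_{\mathbb{A}_R^{a/2}})$), and the chaining estimate of Lemma~\ref{L:hittingproba}, which is where the factor $\exp(-Ca^{-\alpha})$ actually originates (not from counting annular shells). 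Only then is Proposition~\ref{P:lupu-gen} invoked, \emph{once}, at this single point $x$ on the graph $\G_K$: the probability of the forced Green's function event is exactly $\frac1\pi\arctan\bigl(g^{\tilde{\G}_K}(0,x)/\sqrt{\text{threshold}\cdot g^{\tilde{\G}_K}(0)}\bigr)$, and since the threshold is linear in $t$, the bound $\arctan(u)\leq u$ delivers $t^{-1/2}$ directly, together with $d^{-\nu/2}R^{\nu/2}\exp(Ca^{-\alpha})$; finally $g^{\tilde{\G}_K}(0,x)=P_0^{\tilde{\G}_K}(H_x<\infty)\,g^{\tilde{\G}_K}(x)\leq CP_0(H_x<H_K)$ gives the hitting factor, with no last-exit decomposition needed. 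Your proposal has the right ingredients (work on $\tilde{\G}_K$, verify \eqref{eq:critpar0} there, use \eqref{eq:alt-green-diff} and \eqref{eq:lupu-gen}) but inverts the order of quantifiers: the formula must be applied after the capacity event has been converted into a pointwise Green's function event at one remote vertex, not averaged over the annulus.
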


The proof of Lemma~\ref{lem:newformulacap} will involve the following random walk estimate, which we show separately first. This relies on the following facts concerning the geometry of $\G=(G,\lambda)$ under our standing assumptions \eqref{eq:ellipticity}, \eqref{eq:intro_sizeball} and \eqref{eq:intro_Green}. Similarly as in \cite[Lemma 6.1]{DrePreRod2}, see also \cite[(2.2)]{DrePreRod8}, we introduce under the above assumptions on $\mathcal{G}$ the approximate renormalized lattice $\Lambda(L)$ for $L \geq 1$ having the following properties. There  exists a constant $\Cl{CLambda}\in (0,\infty)$ such that for all $x\in{G}$ and $L,N\geq1$, 
\begin{equation}
\label{eq:defLambda}
\begin{cases}
  &  \bigcup_{y\in{\Lambda(L)}}B(y,L)=G, \\
  &  \text{the balls }B(y,\tfrac{L}{2}),\ y\in{\Lambda(L)},\text{ are disjoint,}  \\
  &  |\Lambda(L)\cap B(x,LN)|\leq \Cr{CLambda}N^{\alpha}. 
     \end{cases}
\end{equation}
We will use the lattices $\Lambda(\cdot)$ several times in the sequel. We further say that $\pi=(x_i)_{i\leq L}$ is a path in $\Lambda(L)$ if $x_i\in{\Lambda(L)}$ for all $i\leq L$, and for each  $i<L$, there exists $x\in{B(x_i,L)\cap G}$ and $y\in{B(x_{i+1},L)\cap G}$ such that $x$ and $y$ are neighbors in $\G$.  

\begin{Lemme}\label{L:hittingproba}
 For all $R\geq1$, $a \geq c R^{-1}$ and $2d<e\leq a/4$, one has
    \begin{equation}
\label{eq:hittingprobaxz}
\begin{split} 
P_y\big(H_x<H_{\mathbb{A}_R^{a/2}}\big)\geq c\exp(-Ca^{-\alpha})R^{-\nu}, \text{ for all $y\in{\mathbb{A}_R^{d,e}}$ and $x\in \partial B(\Cr{c:M} R)$}.
\end{split}
\end{equation}
\end{Lemme}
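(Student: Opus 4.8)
The plan is to reduce the hitting probability to a lower bound on a killed Green's function and then bound the latter from below by forcing the walk through an explicit mesoscopic tube that avoids the inner ball. Throughout write $A\stackrel{\text{def.}}{=}\mathbb{A}_R^{a/2}=B((1-a/2)R)$. Note that $x\in\partial B(\Cr{c:M}R)$ is a vertex lying outside $A$ (as $\Cr{c:M}\geq1$), while $y\in\mathbb{A}_R^{d,e}$ sits at distance at least $(\tfrac a2-e)R\geq\tfrac a4 R$ from $A$, since $e\leq a/4$. First I would record the first-entrance identity for the diffusion killed on $A$: applying the strong Markov property at the first hitting time of $x$ gives $g_A(y,x)=P_y(H_x<H_A)\,g_A(x,x)$, so that, using $g_A(x,x)\leq g(x,x)\leq C$ from \eqref{eq:intro_Green},
$$
P_y(H_x<H_A)=\frac{g_A(y,x)}{g_A(x,x)}\geq c\,g_A(y,x).
$$
It therefore suffices to prove $g_A(y,x)\geq c\exp(-Ca^{-\alpha})R^{-\nu}$, uniformly in the admissible $x,y$.

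For the Green's function I would use the tube lower bound, valid by the strong Markov property at the entrance time of $B(x,\rho)$,
$$
g_A(y,x)\geq P_y\big(H_{B(x,\rho)}<H_A\big)\cdot\inf_{z\in B(x,\rho)}g_A(z,x),\qquad \rho\stackrel{\text{def.}}{=}caR,
$$
where $c$ is chosen small enough that $B(x,2\rho)$ is disjoint from $A$, which holds because $d(x,A)\geq\tfrac a2 R\geq2\rho$. The endpoint factor is controlled by a standard interior lower bound for the Green's function of a ball: since $B(x,2\rho)$ is disjoint from $A$, killing on $A$ is irrelevant inside it, whence $g_A(z,x)\geq g_{\partial B(x,2\rho)}(z,x)\geq c\rho^{-\nu}=c(aR)^{-\nu}\geq cR^{-\nu}$ for $z\in B(x,\rho)$, using $a\leq1$ together with \eqref{eq:ellipticity}, \eqref{eq:intro_sizeball} and \eqref{eq:intro_Green}. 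Thus $\inf_{z\in B(x,\rho)}g_A(z,x)\geq cR^{-\nu}$, which supplies the required polynomial factor.

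It then remains to bound $P_y\big(H_{B(x,\rho)}<H_A\big)$ from below by $\exp(-Ca^{-\alpha})$. Here I would build a corridor of width $L\asymp aR$ leading from $y$ out to $B(x,\rho)$ and staying at distance $\gtrsim L$ from $A$. The crucial geometric input is that $y\notin\overline B((1-e)R)$ by \eqref{eq:defoverlineball} and \eqref{eq:defann}, so there is a path from $y$ to infinity avoiding $B((1-e)R)\supset A$; thickening a suitable such path (one reaching radius $\Cr{c:M}R$ within length $O(R)$) into a chain of $n\asymp R/L\asymp 1/a$ balls of radius $L$ taken from the renormalized lattice $\Lambda(L)$ of \eqref{eq:defLambda}, all contained in $\{d(0,\cdot)\geq(1-a/2)R\}$, yields a corridor disjoint from $A$ whose last ball is $B(x,\rho)$. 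By \eqref{eq:ellipticity}, \eqref{eq:intro_sizeball} and \eqref{eq:intro_Green}, the walk started anywhere in one ball of the chain reaches the next before exiting the concentric ball of radius $2L$, hence before hitting $A$, with probability at least a constant $c_0>0$; this is the mesoscopic crossing estimate underlying \cite[Lemma 6.1]{DrePreRod2}. Chaining these estimates along the corridor via the strong Markov property gives $P_y\big(H_{B(x,\rho)}<H_A\big)\geq c_0^{\,n}\geq\exp(-C/a)\geq\exp(-Ca^{-\alpha})$, the last step using $a\leq1$ and $\alpha\geq1$. Combining the three displays yields \eqref{eq:hittingprobaxz}.

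The main obstacle is the construction of the corridor together with the uniform crossing constant $c_0$: one must verify that the unbounded component of $\tilde{\mathcal G}\setminus B((1-e)R)$ containing $y$ admits a path to radius $\Cr{c:M}R$ of length comparable to $R$ staying in the allowed region (so that $n\asymp1/a$ and not larger, using the polynomial geometry from \eqref{eq:intro_sizeball} and e.g.\ \cite[(2.8),(3.11)]{DrePreRod2}), and that the single-ball crossing estimate holds uniformly with the correct avoidance of $A$. Both are precisely where the full strength of \eqref{eq:ellipticity}, \eqref{eq:intro_sizeball} and \eqref{eq:intro_Green} enters; the remaining steps are the routine Markov-property manipulations above. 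I emphasize that the target $\exp(-Ca^{-\alpha})$ is comfortably weaker than the $\exp(-C/a)$ produced by this argument, so no sharpness in the chaining is needed.
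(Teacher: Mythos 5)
Your overall skeleton (reduce to a killed Green's function / chaining constant-probability mesoscopic crossings along a path from $y$ that avoids $\mathbb{A}_R^{a/2}$, whose existence comes from $y\notin\overline{B}((1-e)R)$) is the same as the paper's, but there is a genuine gap at the decisive step: the claim that the corridor can be taken to consist of $n\asymp R/L\asymp 1/a$ balls, i.e.\ that there is a path from $y$ to radius $\Cr{c:M}R$ of length $O(R)$ staying in the allowed region. Nothing in \eqref{eq:ellipticity}, \eqref{eq:intro_sizeball}, \eqref{eq:intro_Green} gives geodesic-type control of the complement of $B((1-e)R)$: the definition \eqref{eq:defoverlineball} only guarantees \emph{some} path from $y$ to infinity avoiding $B((1-e)R)$, and on a general graph satisfying these assumptions (think of fractal-like examples such as graphical Sierpinski carpets, which the paper explicitly includes) this path may wind through long tendrils of $B(\Cr{c:M}R)\setminus B((1-e)R)$. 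The only bound available in general for the number of distinct scale-$aR$ cells of $\Lambda(aR)$ that such a path visits inside $B(\Cr{c:M}R)$ is the volumetric one, $C(\Cr{c:M}/a)^{\alpha}$, from \eqref{eq:defLambda}. This is exactly the count the paper's proof uses, and it is precisely why the lemma asserts $\exp(-Ca^{-\alpha})$ and not $\exp(-C/a)$. Your remark that you obtain the "comfortably" stronger bound $\exp(-C/a)$ is the symptom of the gap: that bound is provable on $\Z^\alpha$ (where radial paths exist) but is not justified — and should not be expected — under the paper's hypotheses. The repair is to chain along the escape path itself, enumerating the distinct lattice cells it meets, and accept the volumetric count; doing so turns your argument into essentially the paper's proof.

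Two secondary points. First, your interior lower bound $g_{\partial B(x,2\rho)}(z,x)\geq c\rho^{-\nu}$ does not follow with the dilation factor $2$: writing $g_{B(x,2\rho)^{\mathsf{c}}}(z,x)=g(z,x)-E_z[g(X_{T},x)]$ with $T$ the exit time of $B(x,2\rho)$, the subtracted term is only $\leq C(2\rho)^{-\nu}$, and $C2^{-\nu}$ need not be smaller than the lower constant $c$ in \eqref{eq:intro_Green}; you must dilate by a large constant $K=K(c,C)$ (this is fixable, and the paper avoids it altogether). Second, your corridor must end at $B(x,\rho)$ for the \emph{specific} vertex $x\in\partial B(\Cr{c:M}R)$, so you also need connectivity to $x$ within the allowed region, with controlled length — again not automatic. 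The paper sidesteps this by stopping the chaining at $\partial B(\Cr{c:M}R)$ and finishing with the transience estimate
\begin{equation*}
P_u\big(H_x<H_{\mathbb{A}_R^{a/2}}\big)\geq P_u(H_x<\infty)-P_u\big(H_{\mathbb{A}_R^{a/2}}<\infty\big)\sup_{v\in{\mathbb{A}_R^{a/2}}}P_v(H_x<\infty)\geq cM^{-\nu}R^{-\nu}-CM^{-2\nu}R^{-\nu},
\end{equation*}
which is positive once $\Cr{c:M}$ is fixed large, using \eqref{eq:intro_Green} and \eqref{eq:hittingcap}; this simultaneously produces the $R^{-\nu}$ factor and proves that $x$ is reachable from afar while avoiding $\mathbb{A}_R^{a/2}$, with no path construction near $x$ needed.
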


\begin{proof}
Let $y\in{\mathbb{A}_R^{d,e}\cap G}$, then by definition, see \eqref{eq:defoverlineball} and \eqref{eq:defann}, there is a nearest-neighbor path $\pi=(x_i)_{i\in{\mathbb{N}}}\subset G$ such that $x_1=y$, $x_i\notin{B((1-e)R)}$ for all $i\geq1$, and $x_i\rightarrow\infty$ as $i\rightarrow\infty$. Applying the identity \eqref{eq:hittingcap} in a manner similar to \cite[(2.17)]{prevost2023passage} for instance, one finds that there exist constants  $\Cl[c]{c:hitting}\in{(0,1/24)}$  and $\Cl[c]{c:hitting2}>0$ such that 
\begin{equation}
\label{eq:hittingprobaimproved}
P_{u}\big(H_{B(v,\Cr{c:hitting}aR)}<H_{B(v,aR/8)^{\mathsf c}}\big)\geq \Cr{c:hitting2}\text{ for all }u,v\in{G}\text{ with }d(u,v)\leq 3\Cr{c:hitting}aR.
\end{equation}
Let $y_1=y$ and for each $k\geq1$, define recursively $y_{k+1}$ as the first vertex in $\Lambda(\Cr{c:hitting}aR)$ such that $B(y_{k+1},\Cr{c:hitting}aR)$ is visited by $\pi$ after last exiting $B(y_k,\Cr{c:hitting}aR)$. We denote by $p$ the smallest integer $q \geq 1$ such that $y_q\in{B((M+1)R)}$, for some constant $M\geq1$ that we will fix later. 
By \eqref{eq:defLambda} and \cite[(2.8)]{DrePreRod2}, we have $d(u,y_{k+1})\leq 3\Cr{c:hitting}aR$ for all $u\in{B(y_k,\Cr{c:hitting}aR)}$ and any $1\leq k\leq p-1$ as long as $aR\geq c$. In particular, for each $1\leq k\leq p-1$, noting that $B(y_{k+1},aR/8)\subset (\mathbb{A}_R^{a/2})^{\mathsf c}$ since $e\leq a/4$, it follows from \eqref{eq:hittingprobaimproved} that the diffusion starting in any point $u\in{B(y_k,\Cr{c:hitting}aR)}$ will reach $B(y_{k+1},\Cr{c:hitting}aR)$ before hitting $\mathbb{A}_R^{a/2}$ with probability at least $\Cr{c:hitting2}$. Noting that $y_k\in{B((M+1)R)}$ for all $1\leq k\leq p$ implies $p\leq C(M/a)^{\alpha}$ by \eqref{eq:defLambda}, using a chaining argument, we deduce that for all $y\in{\mathbb{A}_R^{d,e}\cap G}$,
\begin{equation*}
P_y\big(H_{\partial B(MR)}< H_{\mathbb{A}_R^{a/2}}\big)\geq \exp(-C(M/a)^{\alpha}).
\end{equation*}
Let $x\in{\partial B(MR)}$, then by the strong Markov property at time $H_{\partial B(MR)}$ we have
\begin{equation*}
P_y\big(H_x<H_{\mathbb{A}_R^{a/2}}\big)\geq \exp(-C(M/a)^{\alpha})\inf_{u\in{\partial B(MR)}}P_{u}\big(H_x<H_{\mathbb{A}_R^{a/2}}\big).
\end{equation*}
The last probability is bounded from below by
\begin{equation*}
\begin{split} 
P_{u}(H_x<\infty)-P_{u}\big(H_{\mathbb{A}_R^{a/2}}<\infty\big)\sup_{v\in{\mathbb{A}_R^{a/2}}}P_{v}(H_x<\infty)\geq  cM^{-\nu}R^{-\nu}-CM^{-2\nu}R^{-\nu},
\end{split}
\end{equation*}
where the last inequality follows from \eqref{eq:intro_Green}, \eqref{eq:hittingcap} and \cite[(2.8)]{DrePreRod2}. Fixing $M=\Cr{c:M}$ for a large enough constant $\Cr{c:M}$, the last two equations yield that for $x$ as above, \eqref{eq:hittingprobaxz} holds.
\end{proof}

The proof of Lemma~\ref{lem:newformulacap} utilizes the above result in combination with Proposition~\ref{P:lupu-gen}.

 \begin{proof}[Proof of Lemma~\ref{lem:newformulacap}]  Let $x \in \partial B(\Cr{c:M}R)$. 
Denoting by $\mathrm{cap}_a(K')$ the capacity of a set $K' \subset \tilde{\G}$ for the diffusion on $\tilde{\G}$ killed on $\mathbb{A}_R^{a/2}$, we have by the definition of the capacity in \eqref{eq:defcap} that $\mathrm{cap}_a(K')\geq \mathrm{cap}(K')$. Hence,   if $K'\subset \mathbb{A}_R^{d,e}$ has finitely many connected components, is compact, and $\mathrm{cap}(K')\geq t ((e-d)R)^{\nu}$, we have by \eqref{eq:hittingcap} applied on the graph with infinite killing on $\mathbb{A}_R^{a/2}$ that, with $P_x=P_x^{\tilde{\G}}$,
\begin{equation}
\label{eq:hittingprobaK'}
\begin{split} 
P_x\big(H_{K'}<H_{\mathbb{A}_R^{a/2}}\big)\geq c\cdot\mathrm{cap}(K')\inf_{z\in{K'}}P_x\big(H_{z}<H_{\mathbb{A}_R^{a/2}}\big)\geq cd^{\nu}\exp(-Ca^{-\alpha})t,
\end{split}
\end{equation}
where the last inequality follows from \eqref{eq:hittingprobaxz}, invariance by time reversal, and the inequality $d\leq e-d$. Recall now that $g^{\tilde{\G}_K}$ denotes the Green's function on the metric graph $\tilde{\G}_K$ introduced above Lemma~\ref{lem:newformulacap}. On the event $\mathrm{cap}(\mathcal{K}^0\cap \mathbb{A}_{R}^{d,e})\geq t((e-d)R)^{\nu}$, which implies in particular that $\mathcal{K}^0 \neq \emptyset$, by \eqref{eq:alt-green-diff} applied to $\tilde{\G}_K$, one finds that
\begin{equation*}
\begin{split}
    g_{\{0\}}^{\tilde{\G}_K}(x)-g_{\K^0}^{\tilde{\G}_K}(x)&=E_x\big[g_{\{0\}}^{\tilde{\G}_K}(X_{H_{\K^0}},x)1\{H_{\K^0}<H_{K\cup\{0\}}\}\big]
    \\&\geq cE_x\Big[P_{X_{H_{\K^0\cap \mathbb{A}_{R}^{d,e}}}}\big(H_x<H_{\mathbb{A}_R^{a/2}}\big)1\{H_{\K^0\cap \mathbb{A}_{R}^{d,e}}<H_{\mathbb{A}_R^{a/2}}\}\Big]
    \\&\geq cd^{\nu}\exp(-2Ca^{-\alpha})tR^{-\nu},
\end{split}
\end{equation*}
where we combined \eqref{eq:hittingprobaxz} and \eqref{eq:hittingprobaK'} in the last inequality.
The inequality \eqref{eq:newformulacap} now follows from \eqref{eq:lupu-gen}, applied on the graph $\G_K$, and the inequalities $\arctan(t)\leq t$ valid for all $t \geq 0$, $g^{\tilde{\G}_K}(0,x)=P_0^{\tilde{\G}_K}(H_x<\infty) g^{\tilde{\G}_K}(x)\leq CP_0^{\tilde{\G}}(H_x<H_K)$ and $g^{\tilde{\G}_K}(x),g^{\tilde{\G}_K}(0)\geq c$ by \eqref{eq:intro_Green} and \cite[(2.10)]{DrePreRod2}. To apply \eqref{eq:lupu-gen}, note that $\G_K$ satisfies the first condition in \eqref{T1_signsat}, and hence \eqref{eq:critpar0} as well. The former is true since the Green's function killed on $K$ is smaller than the Green function on $\mathcal{G}$ by definition, see \eqref{eq:defcap}, which one combines with  \eqref{eq:intro_Green} and \cite[Lemma~3.4,(2)]{DrePreRod3}.
\end{proof}

When $K=\emptyset$, \eqref{eq:newformulacap} can be seen in view of \eqref{eq:intro_Green} as a direct consequence of \eqref{eq:cap-tail}, since the event on the left-hand side of \eqref{eq:newformulacap} implies $\mathrm{cap}(\K^0)\geq s(dR)^{\nu}$. However, when $K$ is large, $P_0(H_x<H_K)$ can decrease significantly faster than $R^{-\nu}$ for $x\in{\partial B(\Cr{c:M}R)}$, see \cite[Lemma~2.1]{DrePreRod8}, and the formula \eqref{eq:newformulacap} becomes in a sense stronger than \eqref{eq:cap-tail}. We refer to \eqref{eq:proofmainprop1} and below as to where this improvement is needed, and combining this with ideas from \cite[Section~4]{DrePreRod8} we obtain the following result.

\begin{Lemme}
There exists $\Cl[c]{c:eps}>0$ such that for all $R\geq1$, $cR^{-1}\leq 2a<b<1$, $2d<e\leq a/4$ and $s,t>0$  
\begin{multline}
\label{eq:consequencenewcapformula}
\P\Big(\mathrm{cap}(\mathcal{K}^0\cap \mathbb{A}_{R}^{a,b})\leq s((b-a)R)^{\nu},\,\mathrm{cap}(\mathcal{K}^0\cap \mathbb{A}_{R}^{d,e})\geq t((e-d)R)^{\nu}\Big) 
\\ \leq  CR^{-\frac\nu2}t^{-\frac12}d^{-\frac\nu2}\exp\big(Ca^{-\alpha}-\Cr{c:eps}s^{-\frac1\nu}\big).
\end{multline}

\end{Lemme}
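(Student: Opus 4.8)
The plan is to control the inner‑annulus event $\{\mathrm{cap}(\mathcal{K}^0\cap\mathbb{A}_R^{a,b})\leq sW^\nu\}$, where $W=(b-a)R$, by turning the small capacity into a thin corridor that the cluster of $0$ must thread in order to reach $\mathbb{A}_R^{d,e}$, and to read off the outer event from Lemma~\ref{lem:newformulacap}. Concretely, I would slice $\mathbb{A}_R^{a,b}$ by $m\asymp s^{-\frac1\nu}$ concentric spheres $\partial B(r_1),\dots,\partial B(r_m)$ with radii evenly spaced in $[(1-b)R,(1-a)R]$, so that consecutive spheres lie at distance $w=W/m\asymp Ws^{1/\nu}$ (which is $\geq 1$ in the only nontrivial range $s\gtrsim W^{-\nu}$, the event being empty otherwise since a cluster crossing the annulus has capacity $\geq c$ by \eqref{eq:intro_Green}). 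Writing $A_j=\mathcal{K}^0\cap\partial B(r_j)$ for the footprints and $K=\bigcup_{j}(\partial B(r_j)\setminus A_j)$ for the complementary barrier, monotonicity of capacity gives $\mathrm{cap}(A_j)\leq\mathrm{cap}(\mathcal{K}^0\cap\mathbb{A}_R^{a,b})\leq sW^\nu$ for each $j$ on this event; note $K\subset B((1-a/2)R)$ as Lemma~\ref{lem:newformulacap} requires, and $K$ does not surround $0$.

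The heart of the argument is the resulting hitting bound. Any trajectory from $0$ to a far point $x\in\partial B(\Cr{c:M}R)$ crosses every sphere, and avoids $K$ only by crossing $\partial B(r_j)$ within $A_j$; hence, applying the strong Markov property at the entrance times of $A_1,\dots,A_m$ and bounding the initial factor $P_0(H_{A_1}<\infty)\leq 1$,
\begin{equation}
\label{eq:proofmainprop1}
P_0(H_x<H_K)\leq\Big(\prod_{j=2}^{m}\sup_{y\in A_{j-1}}P_y(H_{A_j}<\infty)\Big)\sup_{y\in A_m}P_y(H_x<\infty).
\end{equation}
By \eqref{eq:hittingcap} and \eqref{eq:intro_Green}, together with $d(A_{j-1},A_j)\geq w$ (triangle inequality) and $d(A_m,x)\geq cR$, each of the $m-1$ inner factors is at most $C\,\mathrm{cap}(A_j)\,w^{-\nu}\leq CsW^\nu w^{-\nu}=Csm^\nu$, while the last factor is at most $CR^{-\nu}$. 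Fixing $m=\lfloor\varepsilon s^{-\frac1\nu}\rfloor$ with $\varepsilon$ so small that $Csm^\nu\leq C\varepsilon^\nu<1$, the product of the inner factors is at most $(C\varepsilon^\nu)^{m-1}=e^{-\Cr{c:eps}s^{-\frac1\nu}}$, whence
\begin{equation*}
\inf_{x\in\partial B(\Cr{c:M}R)}P_0(H_x<H_K)\leq CR^{-\nu}e^{-\Cr{c:eps}s^{-\frac1\nu}}.
\end{equation*}
This is exactly the regime ``$K$ large'' alluded to below \eqref{eq:newformulacap}, in which the hitting probability beats $R^{-\nu}$.

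To combine this with Lemma~\ref{lem:newformulacap}, I would reveal the trace of $\mathcal{K}^0$ on the spheres $\partial B(r_j)$, i.e.\ the footprints $A_j$ and the barrier $K$, and dominate the conditional probability of the outer event $\{\mathrm{cap}(\mathcal{K}^0\cap\mathbb{A}_R^{d,e})\geq t((e-d)R)^\nu\}$ by its $\P^{\tilde{\G}_K}$‑probability. Since the metric‑graph cluster is delimited by the zero set of $\varphi$, the domain Markov property exhibits the exterior field, conditionally, as a free field pinned along the revealed corridor, and a monotonicity (FKG) argument removes the residual boundary constraints at the cost of passing to the genuinely free field on $\tilde{\G}_K$, in which $0$ reaches $\mathbb{A}_R^{d,e}$ only through the gaps $A_j$. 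Lemma~\ref{lem:newformulacap} then bounds this $\P^{\tilde{\G}_K}$‑probability by $CR^{\frac\nu2}t^{-\frac12}d^{-\frac\nu2}e^{Ca^{-\alpha}}\inf_x P_0(H_x<H_K)$. Taking expectations over the revealed trace on $\{\mathrm{cap}(\mathcal{K}^0\cap\mathbb{A}_R^{a,b})\leq sW^\nu\}$, inserting the hitting bound, and combining $R^{\frac\nu2}\cdot R^{-\nu}=R^{-\frac\nu2}$ yields \eqref{eq:consequencenewcapformula}.

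The main obstacle is precisely this conditioning step: one must make rigorous that revealing the annular trace of $\mathcal{K}^0$ leaves, in the exterior, a free field governed by $\P^{\tilde{\G}_K}$ for the (now deterministic) barrier $K$, correctly matching the cluster of $0$ on $\tilde{\G}_K$ with the continuation of $\mathcal{K}^0$ and disposing of the sign of the boundary data; this is where the exploration and resampling ideas of \cite[Section~4]{DrePreRod8}, together with the exact identity \eqref{eq:lupu-gen}, enter. By contrast, the multi‑scale estimate \eqref{eq:proofmainprop1} is elementary and is what upgrades the polynomial prefactor of \eqref{eq:newformulacap} into stretched‑exponential decay; the exponent $s^{-\frac1\nu}$ arises because the capacity budget $sW^\nu$ permits exactly $\asymp s^{-\frac1\nu}$ sub‑annuli in which each per‑scale hitting factor $\asymp sm^\nu$ stays bounded away from $1$.
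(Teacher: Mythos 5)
Your multi-sphere hitting estimate is a correct random-walk computation, and it plays the role that \cite[Lemma~2.1]{DrePreRod8} plays in the paper; the genuine gap is the conditioning step, which you correctly single out as the main obstacle but whose proposed resolution does not work. Your barrier $K=\bigcup_j(\partial B(r_j)\setminus A_j)$ is a function of $\mathcal{K}^0$ itself, and ``revealing the trace'' of $\mathcal{K}^0$ on the spheres means conditioning on a connectivity event, not on the values of $\varphi$ on a deterministic set, so the domain Markov property does not apply to it. Even if you instead explore the whole cluster up to radius $(1-a)R$, the conditional field beyond the explored set is a GFF killed on the \emph{explored cluster} plus the harmonic extension of its boundary data; that data vanishes on the free part of the boundary but is positive (and unknown) on the truncation surface, and on $K$ the field is not small at all --- points of $K$ merely fail to be connected to $0$ and may well carry $\varphi>0$ and belong to other clusters whose presence is positively correlated with the outer event. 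Consequently the conditional law is \emph{not} stochastically dominated by $\P^{\tilde{\G}_K}$: the domination you need goes in the wrong direction, since killing on $K$ suppresses connectivity relative to the free field, whereas the true conditional law enhances it relative to the free field. FKG cannot repair this: $\{\mathcal{K}^0\cap K=\emptyset\}$ is decreasing and the outer capacity event is increasing, so FKG only yields $\P(\text{outer}\cap\text{avoid})\leq\P(\text{outer})\P(\text{avoid})$, i.e.\ it decouples the two constraints and loses exactly the stretched-exponential gain $e^{-cs^{-1/\nu}}$ that the corridor structure was supposed to produce.

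That this gap is essential rather than technical can be seen from the fact that your argument never invokes the standing assumption $\nu<\frac{\alpha}{2}$: sphere slicing, capacity monotonicity, the hitting bounds via \eqref{eq:hittingcap} and \eqref{eq:intro_Green}, and Lemma~\ref{lem:newformulacap} are all available for every $0<\nu\leq\alpha-2$. But the lemma cannot hold in that generality: it feeds into Proposition~\ref{P:t_R-LB}, Theorem~\ref{eq:explic1armbound} and Theorem~\ref{thm:main} through dimension-free manipulations, and the resulting bound $\psi(R)\leq CR^{-\nu/2}$ is false on $\Z^{\alpha}$ for $\alpha>6$, where $\psi(R)\asymp R^{-2}\gg R^{-(\alpha-2)/2}$ by \cite{cai2023onearm}. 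The paper circumvents the conditioning problem by never conditioning the GFF at all: it passes to the loop-soup representation \eqref{eq:loopiso}, takes as obstacle set $\mathcal{O}$ the union of the \emph{big loops} contained in $\mathbb{A}_R^{a,b}$ (those of capacity $>s((b-a)R)^{\nu}$) rather than the complement of the cluster's footprint, and exploits that on the inner event the cluster automatically avoids every big loop, while Poisson thinning makes $\mathcal{L}^{\text{big}}$ independent of the remaining loops; the restriction property \cite[Theorem~6.1]{MR3238780} then gives the clean domination \eqref{eq:proofviaiso} of the conditional probability by $\P^{\tilde{\G}_{\mathcal{O}}}(\cdot)$, to which Lemma~\ref{lem:newformulacap} applies. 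The hypothesis $\nu<\frac{\alpha}{2}$ enters precisely where your proof uses nothing: via \cite[(4.31)]{DrePreRod8} it guarantees that big loops are plentiful, so that except on an event of probability $e^{-c\ell}$ the set $\mathcal{O}$ blocks every coarse-grained crossing path, and \cite[Lemma~2.1]{DrePreRod8} converts this blocking into the decay of $\E^{\Q}\big[P_0(H_x<H_{\mathcal{O}})\big]$ --- the quantity your sphere estimate was meant to control, but for an obstacle set that is genuinely independent of the cluster.
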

\begin{proof}
We use the isomorphism \cite{MR3417508,MR3502602} with the loop soup $\mathcal{L}$ on $\tilde{\G}$ at intensity $1/2$ on the metric graph $\tilde{\G}$, which we now review. We refer to \cite{MR3502602} for a detailed construction, and only recall here that $\mathcal{L}$ is a Poisson point process of Markovian loops on $\tilde{\G}$, that is defined under an auxiliary probability $\mathbb{Q}$. Moreover, if $\mathcal{C}$ is defined as the empty set with probability $1/2$, or otherwise denotes the cluster of $0$ in $\mathcal{L}$, that is the set of points in $\tilde{\G}$ which are connected to $0$ using a finite number of loops in $\mathcal{L}$, then
\begin{equation}
\label{eq:loopiso}
\begin{split} 
\mathcal{C}\stackrel{\text{law}}=\K^0.
\end{split}
\end{equation}
We will work under $\Q$ using the identification \eqref{eq:loopiso} throughout the proof.
The isomorphism \eqref{eq:loopiso} holds not only on ${\G}$ under the conditions listed below \eqref{eq:ellipticity}, but actually for any weighted transient graph even with a positive killing measure, and in particular on the graph $\G_K$ introduced above Lemma~\ref{lem:newformulacap}, for any finite set $K\subset G$. 

Let $\mathcal{L}^{\text{big}}\subset \mathcal{L}$ be obtained from $\mathcal{L}=\sum_i \delta_{\gamma_i}$ by retaining only \textit{big} (macroscopic) loops in the annulus $\mathbb{A}^{a,b}_R$, i.e.~loops $\gamma_i$ whose range satisfies  $\text{cap}(\text{range}(\gamma_i)) > s((b-a)R)^{\nu}$ and for which $\text{range}(\gamma_i) \subset \mathbb{A}^{a,b}_R$. Then on the event $\mathrm{cap}(\mathcal{C}\cap \mathbb{A}_{R}^{a,b})\leq s((b-a)R)^{\nu}$, and by the isomorphism \eqref{eq:loopiso}, $\K^0$ has the same law as the cluster of $0$ for the loop soup $\mathcal{L}\setminus \mathcal{L}^{\text{big}}$. Let us denote by $\mathcal{O}$ the intersection with $G$ of all the loops in $\mathcal{L}^{\text{big}}$. Using the restriction property for the loop soup  \cite[Theorem~6.1]{MR3238780} and the isomorphism \eqref{eq:loopiso} on the graph $\G_\mathcal{O}$, noting also that $\mathcal{L}^{\text{big}}$ and $\mathcal{L}\setminus\mathcal{L}^{\text{big}}$ are independent by defining properties of Poisson point processes, one deduces that 
\begin{multline}
\label{eq:proofviaiso}
    \mathbb{Q}\Big(\mathrm{cap}(\mathcal{C}\cap \mathbb{A}_{R}^{a,b})\leq s((b-a)R)^{\nu},\,\mathrm{cap}(\mathcal{C}\cap \mathbb{A}_{R}^{d,e})\geq t((e-d)R)^{\nu}\,\Big|\,\mathcal{L}^{\text{big}} \Big)
    \\ \leq \mathbb{P}^{\tilde{\G}_\mathcal{O}}\Big(\mathrm{cap}(\mathcal{K}^0\cap \mathbb{A}_{R}^{d,e})\geq t((e-d)R)^{\nu}\Big).
\end{multline}
We refer to \cite[(4.26)]{DrePreRod8} and above for a similar reasoning with more details. For a parameter $\delta>0$ to be fixed later, let us introduce
\begin{equation}
\label{eq:choiceLl}
\begin{split} 
    L\stackrel{\text{def.}}{=}s^{\frac1\nu}(b-a)R\delta^{-\frac1{\nu}}\text{ and }\ell\stackrel{\text{def.}}{=}\left\lfloor\frac{(b-a)R}{5L}\right\rfloor-3=\bigg\lfloor \frac15 \bigg( \frac{\delta}{s}\bigg)^{\frac1\nu}\bigg\rfloor-3,
\end{split}
\end{equation}
and note that the loops in $\mathcal{L}^{\text{big}}$ then have capacity at least $\delta L^{\nu}$ by definition, and that we can assume w.l.o.g.\ that  $\ell,L\geq1$ if $s\leq c=c(\delta)$ and $\delta\leq c'$, since otherwise \eqref{eq:consequencenewcapformula} is either trivial or follows easily from the capacity bounds \eqref{eq:cap-tail}. Let us now denote by $\mathbf{G}$ the event that $\mathcal{O}$ is a $(L,R,\ell/2,\delta L^{\nu})$-good obstacle set as defined above \cite[Lemma~2.1]{DrePreRod8}. That is, for any path $\pi$ in $\Lambda(L)$  from $0$ to $B(R)^c$, there is $A\subset \text{range}(\pi\cap B(R))$ such that $|A|\geq \ell/2$ and $\mathrm{cap}(B(y,L)\cap\mathcal{O})\geq \delta L^{\nu}$ for all $y\in{A}$. Combining \eqref{eq:proofviaiso} and \eqref{eq:newformulacap} for $K=\mathcal{O}$, we obtain that
\begin{align}
\begin{split}
\label{eq:probaonG}
&\mathbb{Q}\Big(\mathrm{cap}(\mathcal{C}\cap \mathbb{A}_{R}^{a,b})\leq s((b-a)R)^{\nu},\mathrm{cap}(\mathcal{C}\cap \mathbb{A}_{R}^{d,e})\geq t((e-d)R)^{\nu},\mathbf{G} \Big)
\\ 
&\leq \inf_{x\in{\partial B(\Cr{c:M}R)}} CR^{\frac\nu2}t^{-\frac12}d^{-\frac\nu2}\exp(Ca^{-\alpha})\mathbb{E}^{\mathbb{Q}}\big[P_0(H_x<H_\mathcal{O})1\{\mathbf{G}\}\big]
\\
&\leq  CR^{-\frac\nu2}t^{-\frac12}d^{-\frac\nu2}\exp(Ca^{-\alpha})\exp(-c\delta \ell),
\end{split}
\end{align}
where the last inequality follows from \cite[Lemma~2.1]{DrePreRod8}, up to assuming w.l.o.g.\ that $L\leq cR$, that is $s\leq c'$ for some small enough constant $c'=c'(\delta)$.

It remains to control the probability on the left-hand side of \eqref{eq:consequencenewcapformula} on the event $\mathbf{G}^c$, which relies on a reasoning similar to, but somewhat simpler than, \cite[Lemma~2.3]{DrePreRod8}.  
Let us denote by $\mathcal{P}$ the set of tuples $\tau=(x_1,\dots,x_\ell)$ such that such that $x_i\in{\Lambda(L)}$ and $B(x_i,L)\subset \mathbb{A}^{a,b}_R$ for all $1\leq i\leq \ell$, $B(x_i,L)\cap B(x_j,L)=\emptyset$ and $x_{i+1}\in{B(x_i,5L)}$ for all $1\leq i<j\leq \ell$. Here $\ell$ and $L$ are as defined in \eqref{eq:choiceLl}.
We write $D_{\tau}$ for the set of $i\in{\{1,\dots,\ell\}}$ such that there exists a loop in $\mathcal{L}^{\text{big}}$ whose range is included in $B(x_i,L)$.

Let us now show that any path $\pi$ in $\Lambda(L)$ from $0$ to $B(R)^{\mathsf c}$ contains a tuple $\tau\in{\mathcal{P}}$, in the sense that $\text{range}(\tau)\subset\text{range}(\pi)$. We call $\pi'$ the subpath of $\pi$ which starts just after $\pi$ last visiting $\overline{B}((1-b)R+L)$, and afterwards stops just before first leaving $B((1-a)R-L)$. Assuming w.l.o.g.\ that $(b-a)R\geq cL$, that is  $s\leq c'$,  one can easily check  by \cite[(2.8)]{DrePreRod2} that $\pi'$ is non-empty.  Let us now define recursively $x_1$ as the first vertex in $\pi'$, and recursively $x_{k+1}$ as the first vertex in $\pi'$ visited after last exiting $B(x_{k},2L)$, and denote by $p$ the smallest integer $k \geq 1$ such that $\pi'$ never exits $B(x_{k},2L)$. Note that by \cite[(2.8)]{DrePreRod2}, we have $d(x_k,x_{k+1})\leq 5L$ for all $1\leq k\leq \ell$ whenever $L\geq C$, and hence $d(x_1,x_p)\leq 5pL$. Since moreover $x_1\in{B((1-b)R+8L)}$ and $x_p\in{B((1-a)R-4L)^{\mathsf c}}$ for $L\geq C$, we deduce that $5pL\geq (b-a)R-12L$, and hence $p\geq \ell$ in view of \eqref{eq:choiceLl}. Noting additionally that $B(x_i,L)\subset\mathbb{A}_R^{a,b}$ for all $1\leq i\leq \ell$, we thus obtain all in all that  $\tau=(x_1,\dots,x_\ell)\in{\mathcal{P}}$.

By the previous paragraph and by definition of $\mathbf{G}$, there exists on the event $\mathbf{G}^c$ a tuple $\tau\in{\mathcal{P}}$ such that $|D_{\tau}|\leq\ell/2$. Moreover, for each $\tau=(x_1,\dots,x_\ell)\in{\mathcal{P}}$, the events $\{\exists\, \gamma_i\in{\mathcal{L}^{\text{big}}},\text{range}(\gamma_i)\subset B(x_i,L)\}$, $1\leq i\leq \ell$, are i.i.d.\ by properties of Poisson point process, and occur with probability at least $p=p(\delta)$ which satisfies $p(\delta)\rightarrow 1$ as $\delta\rightarrow0$ by \cite[(4.31)]{DrePreRod8}. Since by \eqref{eq:defLambda} we have $|\mathcal{P}|\leq C^{\ell}$, we deduce by a union bound that
\begin{equation}
\label{eq:probaGcomp}
    \mathbb{Q}\big(\mathbf{G}^{\mathsf c}\big)\leq C^{\ell}\sup_{\tau\in{\mathcal{P}}}\mathbb{Q}(D_{\tau})\leq C^{\ell}2^{\ell}(1-p)^{\ell/2}\leq \exp(-c\ell),
\end{equation}
where the last inequality holds when $\delta=c$ for a small enough constant $c>0$. Now, the tail asymptotic \eqref{eq:cap-tail} holds on the graph $\G_\mathcal{O}$ by application of \cite[Theorem~1.1]{DrePreRod3}, since the first condition in \eqref{T1_signsat}, and hence \eqref{eq:critpar0} as well, are satisfied on that graph by virtue of \cite[Lemma~3.4,(2)]{DrePreRod3}. It thus follows from \eqref{eq:cap-tail} on the graph $\G_\mathcal{O}$ and \eqref{eq:proofviaiso} that 
\begin{multline}
\label{eq:probaonGcomp}
    \mathbb{Q}\big(\mathrm{cap}(\mathcal{C}\cap \mathbb{A}_{R}^{a,b})\leq s((b-a)R)^{\nu},\mathrm{cap}(\mathcal{C}\cap \mathbb{A}_{R}^{d,e})\geq t((e-d)R)^{\nu},\mathbf{G}^{\mathsf c} \big)
    \\\leq \E^{\mathbb{Q}}\big[\mathbb{P}^{\tilde{\G}_\mathcal{O}}(\mathrm{cap}(\K^0)\geq t((e-d)R)^{\nu})1\{\mathbf{G}^c\}\big]\leq Ct^{-\frac12}(e-d)^{-\frac\nu2}R^{-\frac\nu2}\mathbb{Q}(\mathbf{G}^c).
\end{multline}
Combining \eqref{eq:loopiso}, \eqref{eq:choiceLl}, \eqref{eq:probaonG}, \eqref{eq:probaGcomp} and \eqref{eq:probaonGcomp} for $\delta=c$ as before, assuming w.l.o.g.\ that $s\leq c'$, and recalling that $e-d\geq d$, the claim follows. 
\end{proof} 

We are now ready to finish the:
\begin{proof}[Proof of Proposition~\ref{P:t_R-LB}]
Combining \eqref{eq:1-arm-UB1} with \eqref{eq:consequencenewcapformula}, we have, for all $s>0$ and $t\leq t_{R,\eps}^{d,e}$,
\begin{multline}
\label{eq:proofmainprop1}
\P\Big(\mathcal{K}^0 \cap \partial \overline{B}(R) \neq \emptyset, \, \mathrm{cap}(\mathcal{K}^0\cap \mathbb{A}_{R}^{a,b})\leq s((b-a)R)^{\nu}\Big)\\\leq R^{-\frac{\nu}{2}}\exp\big(-\eps t^{-\frac1\nu}\big)+ CR^{-\frac\nu2}t^{-\frac12}d^{-\frac\nu2}\exp(Ca^{-\alpha}-\Cr{c:eps}s^{-\frac1\nu}\big).
\end{multline}
We now take
\begin{equation*}
\begin{split} 
\eps=\Cr{c:eps2}\stackrel{\text{def.}}{=}\frac{\Cr{c:eps}}{4}\text{ and } t=\exp\big(-\Cr{c:eps}s^{-\frac1\nu}\big),
\end{split}
\end{equation*}
then if $s\leq c(a^{\alpha\nu}\wedge\log(1/d)^{-\nu})$ for some small enough constant $c>0$,  one can bound the right-hand side of \eqref{eq:proofmainprop1} from above by $R^{-\nu/2}\exp\big(-\eps s^{-\frac1\nu}\big)$. Noting that the condition $t\leq t_{R,\eps}^{d,e}$ for the above choice of $t$ is satisfied whenever $s\leq \Cr{c:eps}^{\nu} \log(t_{R,\eps}^{d,e})^{-\nu}$, we conclude in view of \eqref{eq:1-arm-UB1} that any $s$ satisfying all previous requirements is upper bounded by $t_{R}^{a,b}=t_{R,\Cr{c:eps2}}^{a,b}$, i.e.,~\eqref{eq:t_R-LB1} holds.
\end{proof}

\bibliography{bibliographie}
\bibliographystyle{abbrv}

\end{document}